\documentclass[preprint,12pt]{elsarticle}

\usepackage{amssymb}
\usepackage{amsthm}
\usepackage{xcolor}
\usepackage{amsmath}
\usepackage{color}
\usepackage{multirow}
\usepackage{booktabs}

\setlength{\topmargin}{-0.50in} \setlength{\oddsidemargin}{0.20in}
\setlength{\evensidemargin}{0.20in}

\setlength{\textheight}{8.6in} \setlength{\textwidth}{6.2in}

\setlength{\parskip}{5pt}

\usepackage{amsmath}
\usepackage{amsfonts}
\usepackage{amssymb}
\usepackage{color}
\usepackage{dsfont}
\usepackage{float}
\usepackage{multirow}
\usepackage{booktabs}
\usepackage{bigstrut}

\newtheorem{thm}{Theorem}[section]
\newtheorem{lem}{Lemma}[section]
\theoremstyle{definition}
\newtheorem{rem}{\noindent Remark}[section]

\begin{document}
\begin{frontmatter}

\title{\LARGE Corrected Discrete Approximations for the \vspace*{1mm}\\Conditional and Unconditional Distributions of the \vspace*{1mm}\\Continuous Scan Statistic\vspace*{5mm}}

\author{Yi-Ching Yao$^{\rm *}$\footnote{Corresponding author. Email: yao@stat.sinica.edu.tw}}
\author{~~Daniel Wei-Chung Miao$^{\rm **}$}
\author{~~Xenos Chang-Shuo Lin$^{\rm ***}$\vspace*{5mm}}

\address{$^{\rm *}$\rm Institute of Statistical Science, Academia Sinica, Taipei 115, Taiwan} %ROC.}
\address{$^{\rm **}$\rm National Taiwan University of Science and Technology, Taipei 106, Taiwan}
\address{$^{\rm ***}$\rm Aletheia University, %No.32, Zhenli St., Danshui Dist.,
New Taipei City 25103, Taiwan}

\end{frontmatter}

\vspace*{-1.8cm}
\begin{center}
\section*{Abstract}
\end{center}

The (conditional or unconditional) distribution of the continuous
scan statistic in a one-dimensional Poisson  process may be
approximated by that of a discrete analogue via time discretization
(to be referred to as the discrete approximation). With the help of
a change-of-measure argument, we derive the first-order term of the
discrete approximation which involves some functionals of the
Poisson process. Richardson's extrapolation is then applied to yield
a corrected (second-order) approximation. Numerical results are
presented to compare  various approximations.

\vspace*{5mm} {\noindent\bf Keywords:} Poisson process; Richardson's
extrapolation; Markov chain embedding; change of measure;
second-order approximation; stochastic geometry.

\vspace*{1cm}

\section{Introduction}

The subject of scan statistics in one dimension as well as in higher
dimensions has found a great many applications in diverse areas
ranging from astronomy to epidemiology, genetics and neuroscience.
See Glaz, Naus and Wallenstein \cite{gnw2001} and Glaz and Naus
\cite{gn2010} for a thorough review and comprehensive discussion of
scan distribution theory, methods and applications. See also Glaz,
Pozdnyakov and Wallenstein \cite{gpw2009} for
a collection of articles on recent developments.

In the one-dimensional setting, let $\Pi$ be a (homogeneous) Poisson point process of intensity $\lambda>0$
on the (normalized) unit interval $(0,1]$. For a specified window size $0<w<1$ and integers $N\ge k \ge 2$,
we are interested in finding the conditional and unconditional probabilities
\[
P(k;N,w):=\mathbb{P}(S_w\ge k \mid |\Pi|=N)\;\; \mathrm{and}\;\; P^\ast(k;\lambda,w):=\mathbb{P}(S_w\ge k),
\]
where $|\Pi|$ is the cardinality of the point set $\Pi$ (i.e. the total number of Poisson points) and
\[
S_w=S_w(\Pi):=\max_{0\le t \le 1-w} \left|\Pi \cap (t,t+w]\raisebox{4mm}{}\right|,
\]
the maximum number of Poisson points within any window of size $w$.
The (continuous) scan statistic $S_w$ arises from the likelihood
ratio test for  the null hypothesis $\mathcal{H}_0:$ the intensity
function $\lambda(t)=\lambda$ (constant) against the alternative
$\mathcal{H}_a:$ $\lambda(t)=\lambda+\Delta \mathbf{1}_{(a,
a+w]}(t)$ for (unknown) $0\le a\le 1-w$ and $\Delta>0$ where
$\mathbf{1}_{\mathcal{A}}$ denotes the indicator function of a set
$\mathcal{A}$.

By applying results on coincidence probabilities and the generalized
ballot problem ({\em cf.} Karlin and McGregor \cite{km1959} and
Barton and Mallows \cite{bm1965}), Huntington and Naus \cite{hn1975}
and Hwang \cite{hwang1977} derived closed-form expressions for
$P(k;N,w)$ which require to sum a large number of determinants of
large matrices and hence are in general not amenable  to numerical
evaluation. Later by exploiting the fact that $P(k;N,w)$ is
piecewise polynomial in $w$ with (finitely many) different
polynomials of $w$ in different ranges, Neff and Naus \cite{nn1980}
developed a more computationally feasible approach and presented
extensive tables for the {\em exact} $P(k;N,w)$ for various
combinations of $(k,N,w)$ with $N\le 25$. (More precisely, each
number in the tables has an error bounded by $10^{-9}$.) Noting that
$P^\ast(k;\lambda,w)$ is a weighted average of $P(k;N,w)$ over $N$
(with Poisson probabilities as weights), they also provided tables
for $P^\ast(k;\lambda,w)$ with $\lambda\le 16$ where the error size
for each tabulated number varies depending on the combination of
$(k,\lambda,w)$. (The errors tend to be greater for smaller values
of  $w$.) Huffer and Lin \cite{huffer1997, huffer1999} developed an
alternative approach (based on spacings) to computing the exact
$P(k;N,w)$.

Instead of finding the exact $P^\ast(k;\lambda,w)$, Naus
\cite{naus1982} proposed an accurate product-type approximation
based on a heuristic (approximate) Markov property while Janson
\cite{janson1984} derived some sharp bounds. See also Glaz and Naus
\cite{gn1991} for related results in a discrete setting. Treating
the problem as boundary crossing for a two-dimensional random field,
Loader \cite{loader1991} obtained effective large deviation
approximations for the tail probability of the scan statistic in one
and higher dimensions.  For more general large deviation
approximation results, see Siegmund and Yakir \cite{sy2000}, Chan
and Zhang \cite{cz2007} and Fang and Siegmund \cite{fs2015}.

\begin{figure}
\begin{center}
\includegraphics[width=0.75\textwidth]{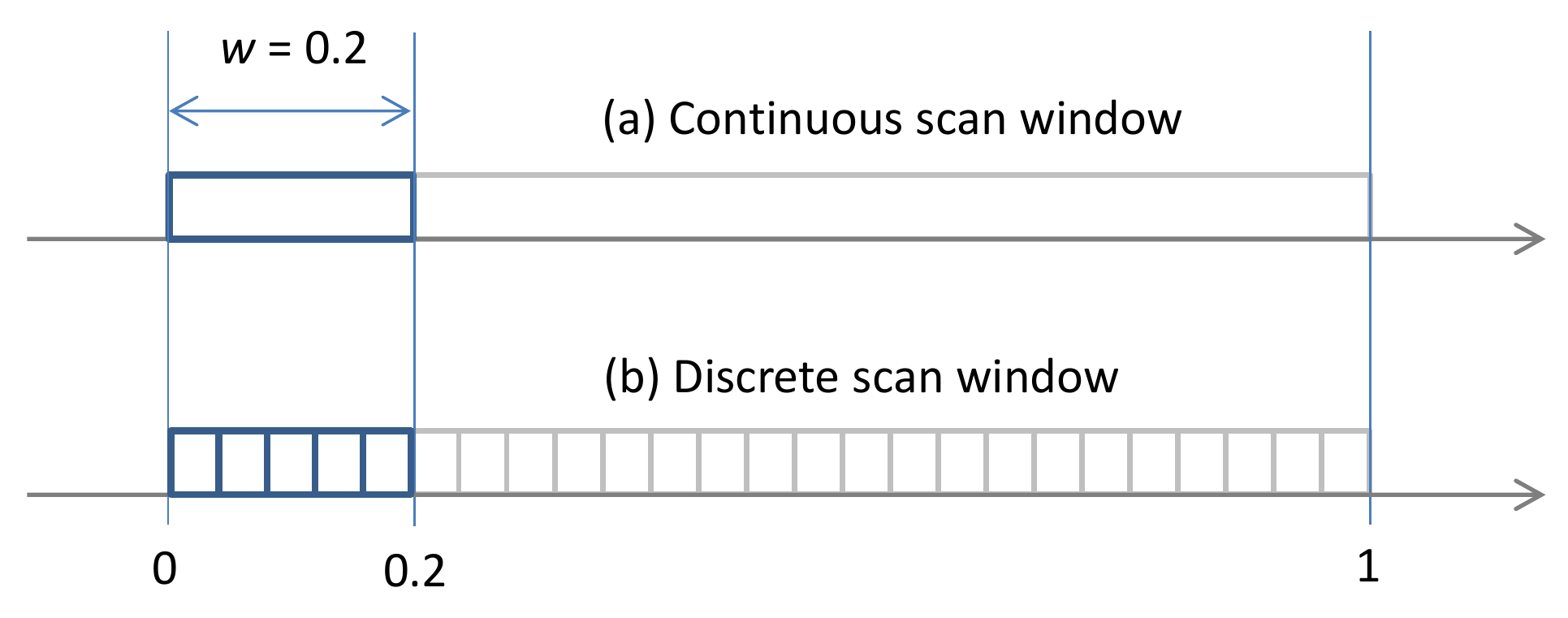}
\end{center}
\vspace*{-5mm} \caption{The continuous and discrete scan
windows}\label{fig:nested}
\end{figure}

The  continuous scan statistic $S_w$ may be approximated by a
discrete analogue via time discretization. Specifically, assuming
$w=p/q$ ($p,q$ relatively prime integers), partition the (time)
interval $(0,1]$ into  $n$ subintervals of length $n^{-1}$, $n$ a
multiple of $q$ ({\em cf.} Figure 1 with $w=1/5, n=25$). Each
subinterval (independently) contains  either no point (with
probability $1-\lambda/n$) or exactly one  point (with probability
$\lambda/n$). Since a window of size $w$ covers $nw$ subintervals,
as an approximation to $S_w$, we define the discrete scan statistic
$S_w^{(n)}$ to be the maximum number of points within any $nw$
consecutive subintervals. For large $n$,
$P^\ast(k;\lambda,w)=\mathbb{P}(S_w\ge k)$ may be approximated by
$\mathbb{P}(S_w^{(n)}\ge k)$, which can be readily calculated using
the Markov chain embedding method ({\em cf.} \cite{fu2001, fk1994,
ka1995}). Indeed, it is known that $\mathbb{P}(S_w\ge
k)-\mathbb{P}(S_w^{(n)}\ge k)=O(n^{-1})$ ({\em cf.}
\cite{fwl2012,wgf2013}).

In Section 2,  as $n$ (multiple of $q$) tends to infinity, we derive
the limit of $n[\mathbb{P}(S_w\ge k)-\mathbb{P}(S_w^{(n)}\ge k)]$,
which involves some functionals of $\Pi$. In order to establish this
limit result, we find it instructive to introduce a slightly
different discrete scan statistic (denoted $S'^{(n)}_w$) which is
stochastically smaller than $S_w$ and $S_w^{(n)}$. With a coupling
device, we derive the limits of  $n[\mathbb{P}(S_w\ge
k)-\mathbb{P}(S'^{(n)}_w\ge k)]$ and $n[\mathbb{P}(S_w^{(n)}\ge
k)-\mathbb{P}(S'^{(n)}_w\ge k)]$. In Section 3, using a
change-of-measure argument, a similar result is obtained for the
conditional probability $\mathbb{P}(S_w\ge k \mid |\Pi|=N)$. Based
on these limit  results, Richardson's extrapolation is then applied
to yield second-order approximations for the conditional and
unconditional distributions of the continuous scan statistic. In
Section 4, numerical results comparing the various approximations
are presented along with some discussion.

\section{The unconditional case}

Recall the window size $w=p/q$ with  $p$ and $q$ relatively prime
integers. For $n=mq\; (m=1,2,\dots)$, let $H^n_i, i=1,\dots, n$, be
$i.i.d.$ with $\mathbb{P}(H^n_i=0)=1-\lambda/n$ and
$\mathbb{P}(H^n_i=1)=\lambda/n$, and let $I^n_i, i=1,\dots, n$, be
$i.i.d.$ with $\mathbb{P}(I^n_i=0)=e^{-\lambda/n}$ and
$\mathbb{P}(I^n_i=1)=1-e^{-\lambda/n}$.  The $i.i.d.$  Bernoulli
sequence  $(H^n_1,\dots,H^n_n)$ approximates the Poisson point
process $\Pi$ by matching the expected number of points in each
subinterval, i.e.
$$\mathbb{E}(H^n_i)=\mathbb{E}\Big(\Big|\Pi \cap  \Big(\frac{i-1}{n},\frac{i}{n}\Big]\Big| \Big)=\frac{\lambda}{n}.$$
On the other hand, the $i.i.d.$  Bernoulli sequence
$(I^n_1,\dots,I^n_n)$  approximates $\Pi$ by matching the
probability of no point in each subinterval, i.e.
$$\mathbb{P}(I^n_i=0)=\mathbb{P}\Big(\Big| \Pi \cap \Big(\frac{i-1}{n},\frac{i}{n}\Big]\Big|=0\Big)=e^{-\lambda/n}.$$
The two discrete scan statistics $S^{(n)}_w$ and $S'^{(n)}_w$ are
now defined in terms of the two Bernoulli sequences as follows:
\begin{align*}
S^{(n)}_w&=S^{(n)}_{w,H}:=\max_{i=1\dots, n-nw+1} \sum_{r=i}^{i+nw-1} H^n_r\;,\\
S'^{(n)}_w&=S^{(n)}_{w,I}:=\max_{i=1\dots, n-nw+1}
\sum_{r=i}^{i+nw-1} I^n_r\;.
\end{align*}
Since $I^n_i$ is stochastically smaller than $H^n_i$ and $|\Pi \cap
((i-1)/n,i/n]|$, it follows that $S^{(n)}_{w,I}$ is stochastically
smaller than $S_w$ and $S^{(n)}_{w,H}$. In Sections 2.1 and 2.2, we
derive $\lim_{n\to \infty} n[\mathbb{P}(S_w \ge
k)-\mathbb{P}(S^{(n)}_{w,I}\ge k)]$ and  $\lim_{n\to \infty}
n[\mathbb{P}(S_w \ge k)-\mathbb{P}(S^{(n)}_{w,H}\ge k)]$,
respectively.

\subsection{Matching the probability of no point}

Since the Bernoulli sequence $(I^n_1,\dots,I^n_n)$ and $\Pi$ match
in the probability of no point in each subinterval, it is
instructive to define  $(I^n_1,\dots,I^n_n)$ in terms of $\Pi$ as
follows:
 $$I_i^n = \mathbf{1}\Big\{\Pi\cap \Big(\frac{i-1}{n},\frac{i}{n}\Big]\ne \emptyset\Big\},\;
i=1,\dots, n.$$ Thus, $(I^n_1,\dots,I^n_n)$ and $\Pi$ are defined on
the same probability space. In particular, $S_w\ge S_{w,I}^{(n)}$
with probability $1$. For fixed $w=p/q$ and for each (fixed)
$k=2,3,\dots$, let
\begin{align*}
\alpha&=\mathbb{P}(\mathcal{A}),\;\;\mbox{where}\;\;\mathcal{A}=\mathcal{A}_{k,w}:=\{S_w\ge k\},\\
\alpha_n&=\mathbb{P}(\mathcal{A}_n),\;\;\mbox{where}\;\;
 \mathcal{A}_n=\mathcal{A}_{n,k,w}:=\{S^{(n)}_{w,I}\ge k\}.
\end{align*}

Note that $\alpha=P^\ast(k;\lambda,w)$ defined in Section 1. In
order to derive the limit of $n(\alpha-\alpha_n)$ as $n\to\infty$,
we need to introduce some functionals of $\Pi$. Let $M:=|\Pi|$,
which is a Poisson random variable with mean $\lambda$. Writing $\Pi
= \{Q_1,\dots,Q_M\}$, assume (with probability $1$) that
$0<Q_1<\cdots<Q_M<1$. Further assume (with probability $1$) that
$w\notin \Pi, 1-w \notin \Pi$, and $Q_j\pm w\notin \Pi$ for
$j=1,\dots, M$ (i.e. $Q_j-Q_i\ne w$ for all $1\le i<j\le M$).
Define the functionals $\nu(\Pi)=\nu(\{Q_1,\dots,Q_M\})$ and
$\tilde{\nu}(\Pi)=\tilde{\nu}(\{Q_1,\dots,Q_M\})$ as follows:
\begin{align*}
\nu(\Pi):=&\sum_{\{\ell: Q_\ell<1-w\}}
\mathbf{1}\left\{\raisebox{5mm}{}\right. S_w<k,
\left|\Pi \cap (Q_\ell,Q_\ell+w]\raisebox{4mm}{}\right|=k-2,\\
&\mbox{~~~~~~~~~~~~~~~ $\left|\Pi \cap
(t,t+w]\raisebox{4mm}{}\right|\le k-2$ for all
$t$ with $Q_\ell\le t \le Q_\ell+w$}\left.\raisebox{5mm}{}\right\},\\
\tilde{\nu}(\Pi):=&\sum_{\ell=1}^M \mathbf{1}\left\{S_w<k,\;
 \max_{0\le t\le 1-w}
\left|(\Pi\cup \{Q_\ell\})\cap
(t,t+w]\raisebox{4mm}{}\right|=k\right\},
\end{align*}
where $\Pi\cup \{Q_\ell\}$ is interpreted as a multiset with
$Q_\ell$ having multiplicity $2$.
\begin{thm}%[A theorem]
For $n = mq\; (m=1,2,\dots)$,
$$\lim_{n\to\infty}n(\alpha-\alpha_n)~=~\frac{\lambda}{2}~\mathbb{E}\left[\nu(\Pi)+\tilde{\nu}(\Pi)\raisebox{4mm}{}\right].$$
\end{thm}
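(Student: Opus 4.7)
Since the coupling $I^n_i = \mathbf{1}\{\Pi \cap ((i-1)/n,i/n] \neq \emptyset\}$ forces $S^{(n)}_{w,I} \le S_w$ almost surely,
$$\alpha-\alpha_n \;=\; \mathbb{P}\bigl(S_w \ge k,\; S^{(n)}_{w,I} < k\bigr),$$
and the task is to identify the leading $n^{-1}$ contribution of this defect probability. The plan is to condition on $\Pi$ and decompose the defect event, up to an $O(n^{-2})$ remainder, into two asymptotically disjoint mechanisms: \emph{(I) grid misalignment}, in which a continuous window of size $w$ contains $k$ Poisson points in distinct subintervals whose integer subinterval-spread nevertheless equals exactly $nw$, so that no grid-aligned window of $nw$ subintervals covers all $k$; and \emph{(II) subinterval collision}, in which two Poisson points share a common subinterval and the removal of one of them drops $S_w$ below $k$. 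Both have probability $O(n^{-1})$, while configurations with two simultaneous defects or three-point collisions involve two independent ``coincidences'' of order $n^{-1}$ and contribute only $O(n^{-2})$.

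\textbf{(I) $\to \nu$.} On a grid-misalignment event, single out an appropriately chosen point $Q_\ell$ of the critical cluster and pass to $\Pi' := \Pi \setminus \{Q_\ell\}$. Generically $S_w(\Pi') < k$, and $(\Pi', Q_\ell)$ satisfies the $\nu$-indicator: the window $(Q_\ell, Q_\ell + w]$ carries $k-2$ points of $\Pi'$ and all shifts $t \in [Q_\ell, Q_\ell + w]$ yield counts $\leq k-2$. Applying Slivnyak's formula,
$$\mathbb{E}\sum_{Q \in \Pi} g(Q, \Pi) \;=\; \lambda \int_0^1 \mathbb{E}\bigl[g(x, \Pi \cup \{x\})\bigr]\,dx,$$
to the Type (I) indicator converts the expected defect count into an integral over the Palm-inverted location $x$; the grid-misalignment criterion becomes a linear inequality on the fractional part of $nx$, which averages to $1/2$ in the $n$-limit and produces $\tfrac{\lambda}{2}\mathbb{E}[\nu(\Pi)]$. \textbf{(II) $\to \tilde\nu$.} For a collision pair $(Q_\ell, Q_{\ell'})$, set $\Pi'' := \Pi \setminus \{Q_{\ell'}\}$. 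Since $I^n$ is insensitive to multiplicity, $S^{(n)}_{w,I}(\Pi) = S^{(n)}_{w,I}(\Pi'')$, and the defect hypothesis gives $S_w(\Pi'') < k$ while $\Pi'' \cup \{Q_\ell\}$, treated as a multiset, has a window of count $k$ --- exactly the $\tilde\nu$-indicator at $Q_\ell$ in $\Pi''$. Mecke's two-point formula integrates the partner point over the subinterval containing $Q_\ell$ (a length-$1/n$ constraint), and after a symmetry factor $1/2$ for the unordered pair, yields $\tfrac{\lambda}{2}\mathbb{E}[\tilde\nu(\Pi)]$.

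Dominated convergence, with $\nu(\Pi), \tilde\nu(\Pi) \leq M = |\Pi|$ and Poisson moments of $M$ finite, passes the limit through the expectation and completes the argument. The main obstacle is mechanism (I): the ``no-cover'' condition is an integer inequality on the fractional parts of $nQ_j$ and must be translated into the exact form of the $\nu$-indicator. This requires selecting the Palm-inverted point correctly, tracking the factor $1/2$ (the Lebesgue measure of the bad-alignment slab within a single subinterval of length $1/n$), and handling edge-of-process configurations ($Q_\ell$ near $0$ or $1-w$) together with potential overlap with mechanism (II), so that all such exceptional pieces contribute at order $o(n^{-1})$.
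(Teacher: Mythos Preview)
Your two-mechanism decomposition is exactly the paper's: mechanism (I) is the paper's event $\mathcal{G}_1$ (at most one Poisson point per subinterval, defect forced by a pair of occupied subintervals $i$ and $i+nw$), and mechanism (II) is $\bigcup_i \mathcal{G}_{2,i}$ (exactly one doubly-occupied subinterval). The paper, however, does not invoke Palm calculus. Instead it uses an elementary change-of-measure on the Bernoulli sequence $(I_1^n,\dots,I_n^n)$: for mechanism (I) it compares the configuration with $I_{i+nw}^n=1$ to the one with $I_{i+nw}^n=0$, picking up the odds ratio $\rho_n=e^{\lambda/n}-1=\lambda/n+O(n^{-2})$; for mechanism (II) it replaces $\tilde I_i^n=2$ by $I_i^n=1$, picking up $\tilde\rho_n=\lambda/(2n)+O(n^{-2})$. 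This delivers the factor $\lambda/2$ in both pieces without Slivnyak or Mecke, and the $1/2$ in mechanism (I) appears earlier, from $\mathbb{P}(Q_{(i+nw)}-Q_{(i)}<w)=1/2$ conditional on both subintervals being occupied. Your Palm route is a legitimate alternative and arguably more conceptual; the paper's odds-ratio argument is more elementary and makes the $O(n^{-2})$ bookkeeping explicit.

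There is one concrete confusion in your handling of mechanism (I). You write $\Pi':=\Pi\setminus\{Q_\ell\}$ and then say $(Q_\ell,Q_\ell+w]$ carries $k-2$ points of $\Pi'$, with $Q_\ell$ playing the role of the index in the $\nu$-sum. But $\nu(\Pi')$ sums over points \emph{of} $\Pi'$, so the anchor $Q_\ell$ must survive the deletion; moreover, removing the left point $Q_\ell$ does not change $|\Pi\cap(Q_\ell,Q_\ell+w]|$ since the interval is open on the left, so you would get $k-1$, not $k-2$. The point to Palm-remove is the \emph{right} endpoint of the critical cluster (the point in subinterval $i+nw$, in the paper's notation), and the surviving left point becomes the anchor $Q_\ell$ in $\nu(\Pi')$. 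Once you make this correction, the Slivnyak integral is over the location of the removed right point, and the ``linear inequality on the fractional part'' is precisely the constraint $Q_{(i+nw)}-Q_{(i)}<w$, which does average to $1/2$ over the subinterval. The extra clause in $\nu$ (counts $\le k-2$ for all $t\in[Q_\ell,Q_\ell+w]$) corresponds to the paper's requirement that every discrete window of length $nw$ containing position $i+nw$ has sum at most $k-2$; you should check that this is exactly what is needed for the map from defect configurations to $\nu$-configurations to be a bijection up to $O(n^{-2})$.
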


\begin{proof}
Denoting the complement of $\mathcal{A}_n$ by $\mathcal{A}_n^c$ and
noting that $\mathcal{A}_n \subset \mathcal{A}$, we have
$\alpha-\alpha_n = \mathbb{P}(\mathcal{A}) -
\mathbb{P}(\mathcal{A}_n) = \mathbb{P}(\mathcal{A}\cap
\mathcal{A}_n^c)$. For $i=1,\dots,n$, let
$$\tilde{I}^n_i=\Big|\Pi\cap\Big(\frac{i-1}{n},\frac{i}{n}\Big]\Big|,\;
\mbox{the number of Poisson points in the $i$-th subinterval}.$$
 Then $\tilde{I}^n_i=0$ implies $I_i^n=0$ and
$\tilde{I}_i^n\ge 1$ implies $I_i^n = 1$. Consider the following
disjoint events
\begin{eqnarray}
\hspace*{11mm}\mathcal{G}_1\hspace*{1.8mm} &=& \{\tilde{I}_j^n\le 1, j=1,\dots,n\}, \nonumber\\
\hspace*{11mm}\mathcal{G}_{2,i} &=& \{\tilde{I}_i^n = 2, \tilde{I}_j^n\le 1\mbox{ for all $j\ne i$}\},~i=1,\dots,n, \nonumber\\
\hspace*{11mm}\mathcal{G}_3\hspace*{1.8mm} &=& \{\tilde{I}_j^n =
\tilde{I}_{j'}^n = 2\mbox{ for some $j\ne
j'$}\}\cup\{\tilde{I}_j^n\ge 3\mbox{ for some $j$}\}.\nonumber
\end{eqnarray}
We have
\begin{align}
\hspace*{11mm}\alpha -\alpha_n &~=& \mathbb{P}(\mathcal{A} \cap
\mathcal{A}_n^c)\hspace*{76mm}\nonumber\\
&~=& \mathbb{P}(\mathcal{A}\cap \mathcal{A}_n^c\cap\mathcal{G}_1) +
\sum_{i=1}^n\mathbb{P}(\mathcal{A}\cap
\mathcal{A}_n^c\cap\mathcal{G}_{2,i})+\mathbb{P}(\mathcal{A}\cap
\mathcal{A}_n^c\cap\mathcal{G}_3).\label{eq:eq_3_1}
\end{align}
Claim that
\begin{eqnarray}
\hspace*{11mm}\mathbb{P}(\mathcal{A}\cap \mathcal{A}_n^c\cap\mathcal{G}_1)\hspace*{1.8mm} &=& \frac{1}{2}\sum_{i=1}^{n-nw}P_i^{(n)} +~ O(n^{-2}),\label{eq:eq_3_2}\\
\hspace*{11mm}\sum_{i=1}^n\mathbb{P}(\mathcal{A}\cap
\mathcal{A}_n^c\cap\mathcal{G}_{2,i}) &=& \sum_{i=1}^{n}\widetilde{P}_i^{(n)} +~ O(n^{-2}),\label{eq:eq_3_3}\\
\hspace*{11mm}\mathbb{P}(\mathcal{A}\cap
\mathcal{A}_n^c\cap\mathcal{G}_3)\hspace*{1.8mm} &=&
O(n^{-2}),\label{eq:eq_3_4}
\end{eqnarray}
where
\begin{align}
P_i^{(n)} = \mathbb{P}\left(\mathcal{A}_n^c,
\sum_{r=i+1}^{i+nw-1}I_r^n = k-2,
I_i^n = I_{i+nw}^n = 1\right),~i=1,\dots,n-nw,\label{eq:eq_3_5}\\
\widetilde{P}_i^{(n)} = \mathbb{P}\left(\mathcal{A}_n^c, \tilde{I}_i^n = 2, \sum_{r=i^\prime}^{i^\prime+nw-1}I_r^n = k-1\mbox{ for some $i'$ with}\right.\hspace*{16mm}\label{eq:eq_3_6}\\
\hspace*{14mm}\left.1\le i^\prime\le i\le i^\prime + nw -1\le
n\raisebox{6mm}{}\right),~i=1,\dots,n.\nonumber
\end{align}

Since $\mathbb{P}(\mathcal{G}_3)=O(n^{-2})$, (\ref{eq:eq_3_4})
follows easily. To prove (\ref{eq:eq_3_2}), note that when
$\tilde{I}_i^n\le 1$ for all $i$ (i.e. on the event
$\mathcal{G}_1$), each subinterval $((i-1)/n,i/n]$ contains at most
one Poisson point. If $\tilde{I}_i^n = 1$, denote the only Poisson
point in $((i-1)/n,i/n]$ by $Q_{(i)}$ whose location is uniformly
distributed over $((i-1)/n,i/n]$. When $\tilde{I}_i^n\le 1$ for all
$i$, in order for $\mathcal{A}\cap \mathcal{A}_n^c$ to occur, there
must exist some pair $(i,i^\prime)$ with $i^\prime = i+nw$ such that
$$\sum_{r=i+1}^{i^\prime-1}\tilde{I}_r^n = k-2,\;
\tilde{I}_i^n = \tilde{I}_{i'}^{n} = 1,\;\mbox{and }Q_{(i^\prime)} -
Q_{(i)}<w.$$ So we have $\mathcal{A}\cap
\mathcal{A}_n^c\cap\mathcal{G}_1 =
\cup_{i=1}^{n-nw}\mathcal{G}_{1,i}$ where for $i = 1,\dots,n-nw$,
$$
\begin{array}{r}
\mathcal{G}_{1,i} = \mathcal{A}_n^c\cap\left\{\tilde{I}_j^n\le 1\mbox{ for all $j$, }\displaystyle\sum_{r=i+1}^{i+nw-1}\tilde{I}_r^n = k-2,\right.\\
\left.\displaystyle\tilde{I}_i^n = \tilde{I}_{i+nw}^{n} =
1,\;\mbox{and }Q_{(i+nw)} - Q_{(i)}<w\raisebox{5mm}{}\right\}.
\end{array}
$$
Since
$$\sum_{1\le i<j\le n-nw}\mathbb{P}(\tilde{I}_i^n = \tilde{I}_{i+nw}^n = \tilde{I}_j^n = \tilde{I}_{j+nw}^n = 1) = O(n^{-2}),$$
we have
\begin{eqnarray}
\mathbb{P}(\mathcal{A}\cap \mathcal{A}_n^c\cap\mathcal{G}_1) &=&
\sum_{i=1}^{n-nw}\mathbb{P}(\mathcal{G}_{1,i}) + O(n^{-2})\nonumber\\
&=& \frac{1}{2}\sum_{i=1}^{n-nw}\mathbb{P}(\mathcal{G}_{1,i}^\prime)
+ O(n^{-2}),\label{eq:eq_3_7}
\end{eqnarray}
where
$$\mathcal{G}_{1,i}^\prime = \mathcal{A}_n^c\cap\left\{\tilde{I}_j^n\le 1\mbox{ for all $j$, }\displaystyle\sum_{r=i+1}^{i+nw-1}\tilde{I}_r^n = k-2,\;\tilde{I}_i^n = \tilde{I}_{i+nw}^n = 1\right\}.$$
In (\ref{eq:eq_3_7}), we have used the facts that
$\tilde{I}_1^n,\dots,\tilde{I}_n^n$ are independent and that given
$\tilde{I}_i^n = \tilde{I}_{i+nw}^n = 1$, $Q_{(i)}$ and $Q_{(i+nw)}$
are (conditionally) independent and uniformly distributed over
$((i-1)/n, i/n]$ and $((i+nw-1)/n, (i+nw)/n]$, respectively, so that
$Q_{(i+nw)}-Q_{(i)}<w$ with (conditional) probability 1/2, which
implies $\mathbb{P}(\mathcal{G}_{1,i}) =
\frac{1}{2}\mathbb{P}(\mathcal{G}_{1,i}^\prime)$. For
$i=1,\dots,n-nw$, define
\begin{equation}
\mathcal{G}_{1,i}^{\prime\prime} =
\mathcal{A}_n^c\cap\left\{\sum_{r=i+1}^{i+nw-1}I_r^n = k-2,\;I_i^n =
I_{i+nw}^n = 1\right\},\label{GG}
\end{equation}
which is the event inside the parentheses on the right-hand side of
 (\ref{eq:eq_3_5}), so that $P_i^{(n)}=\mathbb{P}(\mathcal{G}_{1,i}^{\prime\prime})$. Note that $\mathcal{G}_{1,i}^\prime\subset\mathcal{G}_{1,i}^{\prime\prime}$ and
that
$\mathcal{G}_{1,i}^{\prime\prime}\setminus\mathcal{G}_{1,i}^\prime$
is contained in
$$\{I_i^n = I_{i+nw}^n = 1,\;\tilde{I}_j^n\ge 2\mbox{ for some $j$}\},$$
which has a probability of order $n^{-3}$. By (\ref{eq:eq_3_7}),
$$\mathbb{P}(\mathcal{A}\cap \mathcal{A}_n^c\cap\mathcal{G}_1) = \frac{1}{2}\sum_{i=1}^{n-nw}\mathbb{P}(\mathcal{G}_{1,i}^{\prime\prime}) + O(n^{-2}) = \frac{1}{2}\sum_{i=1}^{n-nw}P_i^{(n)} + O(n^{-2}),$$
establishing (\ref{eq:eq_3_2}).

To prove (\ref{eq:eq_3_3}), let $\mathcal{H} = \{I_j = I_{j+nw} =
1\mbox{ for some $1\le j \le n-nw$}\}$. On
$\mathcal{G}_{2,i}\cap\mathcal{H}^c$, in order for $\mathcal{A}\cap
\mathcal{A}_n^c$ to occur, there must exist some $i^\prime$ with
$1\le i'\le i\le i' + nw - 1\le n$ such that $\sum_{r=i'}^{i' +
nw-1}I_r^n = k-1$ (implying that $\sum_{r=i'}^{i' +
nw-1}\tilde{I}_r^n = k$). It follows that $\mathcal{A}\cap
\mathcal{A}_n^c\cap\mathcal{G}_{2,i}\cap\mathcal{H}^c\subset\mathcal{G}_{2,i}^\prime\subset
\mathcal{A}\cap \mathcal{A}_n^c\cap\mathcal{G}_{2,i}$, where
\begin{align*}
\mathcal{G}_{2,i}^\prime =
\mathcal{A}_n^c\cap\left\{\raisebox{5mm}{}\tilde{I}_i^n =
2,\;\tilde{I}_j^n\le 1\mbox{ for all $j\ne
i$},\right.\hspace*{50mm}\\
\hspace*{10mm}\left.\sum_{r=i'}^{i'+nw-1}I^n_r=k-1\mbox{ for some
$i^\prime$ with } 1\le i'\le i\le i'+nw-1\le n\right\}.
\end{align*}
Since $\mathbb{P}(\mathcal{G}_{2,i}\cap\mathcal{H}) = O(n^{-3})$, we
have $\sum_{i=1}^n\mathbb{P}(\mathcal{G}_{2,i}\cap\mathcal{H}) =
O(n^{-2})$, implying that
$$\sum_{i=1}^n\mathbb{P}(\mathcal{A}\cap \mathcal{A}_n^c\cap\mathcal{G}_{2,i}) = \sum_{i=1}^n\mathbb{P}(\mathcal{G}_{2,i}^\prime) + O(n^{-2}) =
\sum_{i=1}^n\mathbb{P}(\mathcal{G}_{2,i}^{\prime\prime}) +
O(n^{-2}),$$ where
\begin{align*}\mathcal{G}_{2,i}^{\prime\prime} = \mathcal{A}_n^c\cap\left\{\tilde{I}_i^n=2,
\sum_{r=i'}^{i'+nw-1}I_r^n =
k-1\right.\hspace*{40mm}\\\hspace*{10mm}\left.\mbox{ for some
$i^\prime$ with $1\le i'\le i\le i'+nw-1\le
n$}\raisebox{5mm}{}\right\}.
\end{align*}
(Note that
$\mathcal{G}_{2,i}^\prime\subset\mathcal{G}_{2,i}^{\prime\prime}$
and
$\mathcal{G}_{2,i}^{\prime\prime}\setminus\mathcal{G}_{2,i}^\prime$
is contained in the event $\{\tilde{I}_i^n=2, \tilde{I}_j^n\ge
2\mbox{ for some $j\ne i$}\}$, which has a probability of  order
$n^{-3}$.) By (\ref{eq:eq_3_6}), $\widetilde{P}_i^{(n)} =
\mathbb{P}(\mathcal{G}_{2,i}^{\prime\prime})$. This establishes
(\ref{eq:eq_3_3}).

By (\ref{eq:eq_3_1})--(\ref{eq:eq_3_4}), we have
\begin{equation}
\alpha - \alpha_n = \frac{1}{2}\sum_{i=1}^{n-nw}P_i^{(n)} +
\sum_{i=1}^{n}\widetilde{P}_i^{(n)} + O(n^{-2}).\label{eq:eq_3_8}
\end{equation}
For $i=1,\dots,n-nw$, let
$P_i^{\prime(n)}=\mathbb{P}(\mathcal{F}_i)$ where
\begin{align*}
 \mathcal{F}_i&:=\mathcal{A}_n^c \cap \Big\{
  \sum_{r=i+1}^{i+nw-1}I_r^n=k-2, I^n_i=1, I^n_{i+nw}=0,
 \;\mbox{sum of any $nw$}\notag\\
 %\mbox{ sum of any $nw$}\right.\notag
 &\mbox{~~~~~~~~~~~~
 consecutive $I^n_r$ including
 $r=i+nw$ is at most $k-2$}
 \Big\}.\label{extra}
\end{align*}
Claim that
\begin{equation}
P_i^{(n)}/P_i^{\prime(n)} = \rho_n \mbox{~~for all~~} i=1,\dots,
n-nw,\label{extra1}
\end{equation}
where
\begin{align*}
\rho_n:=\frac{\mathbb{P}(I_{i+nw}^n=1)}{~\mathbb{P}(I_{i+nw}^n=0)~}
~=~ \frac{~1-e^{-\lambda/n}~}{e^{-\lambda/n}} ~=~ e^{\lambda/n} - 1.
\end{align*}
To establish the claim, recall that
$P_i^{(n)}=\mathbb{P}(\mathcal{G}_{1,i}^{\prime\prime})$ where
$\mathcal{G}_{1,i}^{\prime\prime}$ ({\em cf.} (\ref{GG})) depends
only on $(I^n_1,\dots,I^n_n)$. It is instructive to interpret
$\mathcal{G}_{1,i}^{\prime\prime}$ as
 a collection of configurations
$(I^n_1,\dots,I^n_n)=(h_1,\dots,h_n)$ where $(h_1,\dots,h_n)$
satisfies $h_j=0$ or $1$ for all $j$, $h_i=h_{i+nw}=1$, and
$$\max_{j=1,\dots,n-nw+1} \sum_{r=j}^{j+nw-1} h_r <k\;,\;\; \sum_{r=i+1}^{i+nw-1} h_r=k-2.$$
Likewise, the event $\mathcal{F}_i$ is   a collection of
configurations $(I^n_1,\dots,I^n_n)=(h'_1,\dots,h'_n)$ where
$(h'_1,\dots,h'_n)$ satisfies $h'_j=0$ or $1$ for all $j$, $h'_i=1,
h'_{i+nw}=0$,
$$\max_{j=1,\dots,n-nw+1} \sum_{r=j}^{j+nw-1} h'_r <k\;,\;\; \sum_{r=i+1}^{i+nw-1} h'_r=k-2,$$
and {\em sum of any $nw$ consecutive $h'_r$ including $r=i+nw$ is at
most $k-2$}.
 It is readily seen that
a configuration $(I^n_1,\dots,I^n_n)=(h_1,\dots,h_n)$ is in
$\mathcal{G}_{1,i}^{\prime\prime}$ if and only if the configuration
$(I^n_1,\dots,I^n_n)=(h'_1,\dots,h'_n)$  is in $\mathcal{F}_i$ where
$(h'_1,\dots,h'_n)=(h_1,\dots,h_n)-\mathbf{e}_{i+nw}$ with
$\mathbf{e}_{i+nw}$ being the vector of zeroes except for  the
$(i+nw)$-th entry being $1$. The claim (\ref{extra1}) now follows
from the independence property of $I^n_1,\dots,I^n_n$.

By (\ref{extra1}),
\begin{align}
\rho^{-1}_n\sum_{i=1}^{n-nw}P_i^{(n)} =
\sum_{i=1}^{n-nw}P_i^{\prime(n)}& =\sum_{i=1}^{n-nw}
\mathbb{P}(\mathcal{F}_i)=
\mathbb{E}\left[{\nu}^{(n)}(\Pi)\right],\label{eq:eq_3_9}
\end{align}
where
\begin{align*}
{\nu}^{(n)}(\Pi):=&\sum_{i=1}^{n-nw}\mathbf{1}\left\{\mathcal{A}_n^c,  \sum_{r=i+1}^{i+nw-1}I_r^n=k-2, I^n_i=1, I^n_{i+nw}=0,\mbox{ sum of any}\right.\\
&\left.\mbox{~~~~~~~~$nw$ consecutive $I^n_r$ including $r=i+nw$ is
at most $k-2$}\raisebox{5mm}{}\right\}.
\end{align*}

To deal with $\widetilde{P}^{(n)}_i, i=1,\dots,n$, let
\begin{align*}
\widetilde{P}^{\prime(n)}_i:=\mathbb{P}\left(\mathcal{A}_n^c,
I_i^n=1,
\sum_{r=i^\prime}^{i^\prime+nw-1}I_r^n=k-1\;\mbox{ for some $i^\prime$ with}\right.\hspace*{10mm}\\
\left.1\le i^\prime\le i\le i^\prime+nw-1\le
n\raisebox{5mm}{}\right).
\end{align*}
By an argument similar to the proof of (\ref{extra1}), we have
$\widetilde{P}_i^{(n)}/\widetilde{P}_i^{\prime(n)} = \tilde{\rho}_n$
for all $i=1,\dots,n$ where
$$\tilde{\rho}_n ~=~ \frac{\mathbb{P}(\tilde{I}_i^n=2)}{~\mathbb{P}(I_i^n=1)~} ~=~ \frac{e^{-\lambda/n}(\lambda/n)^2}{~2(1-e^{-\lambda/n})~}.$$
So,
\begin{equation}\tilde{\rho}_n^{-1}\sum_{i=1}^n\widetilde{P}_i^{(n)}
~=~ \sum_{i=1}^n\widetilde{P}_i^{\prime(n)} =
\mathbb{E}\left[\tilde{\nu}^{(n)}(\Pi)\right],\label{eq:eq_3_10}\end{equation}
where
\begin{align*}
\tilde{\nu}^{(n)}(\Pi):=\sum_{i=1}^n\mathbf{1}\left\{\mathcal{A}_n^c,
I_i^n=1,
\sum_{r=i^\prime}^{i^\prime+nw-1}I_r^n=k-1\right.\hspace*{28mm}\\
\hspace*{10mm}\left.\mbox{ for some $i^\prime$ with }1\le
i^\prime\le i\le i^\prime+nw-1\le n\raisebox{6mm}{}\right\}.
\end{align*}
Since $\rho_n = \lambda/n + O(n^{-2})$ and $\tilde{\rho}_n =
\lambda/(2n) + O(n^{-2})$, it follows from (\ref{eq:eq_3_8}),
(\ref{eq:eq_3_9}) and (\ref{eq:eq_3_10}) that
\begin{equation}
n(\alpha-\alpha_n)
-\frac{\lambda}{2}\;\mathbb{E}\left[\nu^{(n)}(\Pi)+\tilde{\nu}^{(n)}(\Pi)\right]=O(n^{-1}).
\label{eq:eq_3_11}
\end{equation}
Note that $\nu^{(n)}(\Pi)$ and $\tilde{\nu}^{(n)}(\Pi)$ converge
a.s. to $\nu(\Pi)$ and $\tilde{\nu}(\Pi)$, respectively. Since
$$\max\{\nu^{(n)}(\Pi), \tilde{\nu}^{(n)}(\Pi)\}~\le~\sum_{i=1}^{n} \mathbf{1}\{I^n_i=1\} ~\le~ \left|\Pi\right|,$$ we have by
the dominated convergence theorem that
$\mathbb{E}[\nu^{(n)}(\Pi)+\tilde{\nu}^{(n)}(\Pi)]$ converges to
$\mathbb{E}[\nu(\Pi)+\tilde{\nu}(\Pi)]$, which together with
(\ref{eq:eq_3_11}) completes the proof.
\end{proof}

\begin{rem}%{\em Remark 1.}
With a little more effort, it can be shown that
$$\mathbb{E}\left[\nu^{(n)}(\Pi)+\tilde{\nu}^{(n)}(\Pi)\right]-\mathbb{E}\left[\nu(\Pi)+\tilde{\nu}(\Pi)\raisebox{4mm}{}\right]=O(n^{-1}),$$
which together (\ref{eq:eq_3_11}) yields
\begin{equation}
\alpha-\alpha_n=C_\alpha n^{-1}+O(n^{-2}),\label{alpha}
\end{equation}
where
\begin{equation}
C_\alpha=\frac{\lambda}{2}\mathbb{E}\left[\nu(\Pi)+\tilde{\nu}(\Pi)\raisebox{4mm}{}\right].\label{alpha1}
\end{equation}
\end{rem}

\subsection{Matching the expected number of points}

Recall that $H_i^n, i = 1,\dots,n$ are {\it i.i.d.} with
$\mathbb{P}(H_i^n=0) = 1 - \lambda/n$ and $\mathbb{P}(H_i^n=1) =
\lambda/n$. Let $\beta_n = \mathbb{P}(\mathcal{B}_n)$ where
$$\mathcal{B}_n =\mathcal{B}_{n,k,w}:= \{S^{(n)}_{w,H}\ge k\}=\left\{\max_{i=1,\dots,n-nw+1}\sum_{r=i}^{i+nw-1}H_r^n~\ge~ k\right\}.$$
\begin{lem} For $n=mq\; (m=1,2,\dots)$,
$$\lim_{n\to\infty}\frac{2n}{\lambda^2}(\beta_n - \alpha_n) = -\alpha + \int_0^1
\mathbb{P}\left(\max_{0\le t\le
1-w}\left|(\Pi\cup\{u\})\cap(t,t+w]\raisebox{5mm}{}\right|\ge
k\right)du.$$
\end{lem}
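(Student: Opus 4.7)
The plan is to compare the two Bernoulli sequences by a monotone coupling and then identify the $O(n^{-1})$ leading term via a Riemann sum over the location of a single ``extra'' point. On a common probability space take i.i.d.\ Uniform$(0,1]$ variables $U_1,\dots,U_n$ and set $I_i^n=\mathbf{1}\{U_i\le p_n\}$ and $H_i^n=\mathbf{1}\{U_i\le q_n\}$, where $p_n=1-e^{-\lambda/n}$ and $q_n=\lambda/n$; then $H_i^n\ge I_i^n$ a.s., so $\mathcal{A}_n\subset\mathcal{B}_n$ and $\beta_n-\alpha_n=\mathbb{P}(\mathcal{B}_n\cap\mathcal{A}_n^c)$. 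Let $F_i=\mathbf{1}\{p_n<U_i\le q_n\}=H_i^n-I_i^n$. Since the $F_i$'s are i.i.d.\ with $\mathbb{P}(F_i=1)=q_n-p_n=\lambda^2/(2n^2)+O(n^{-3})$, we have $\mathbb{P}(\sum_i F_i\ge 2)=O(n^{-2})$, and $H^n=I^n$ on $\{\sum_i F_i=0\}$, so
$$\beta_n-\alpha_n=\sum_{i=1}^n\mathbb{P}(\mathcal{B}_n\cap\mathcal{A}_n^c,\,F=\{i\})+O(n^{-2}).$$

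I would next analyse each summand by conditioning on $\{F=\{i\}\}$. This forces $I_i^n=0$ and $H^n=I^n+\mathbf{e}_i$, with the $(I_j^n)_{j\ne i}$ conditionally i.i.d.\ Bernoulli of parameter $\bar p_n=p_n/(1-(q_n-p_n))=p_n+O(n^{-3})$, while $\mathbb{P}(F=\{i\})=(q_n-p_n)(1-(q_n-p_n))^{n-1}=\lambda^2/(2n^2)+O(n^{-3})$. Because $\mathcal{A}_n\subset\mathcal{B}_n$,
$$\mathbb{P}(\mathcal{B}_n\cap\mathcal{A}_n^c\mid F=\{i\})=\mathbb{P}(\mathcal{B}_n\mid F=\{i\})-\mathbb{P}(\mathcal{A}_n\mid F=\{i\}).$$
For $i=i_n$ with $i_n/n\to u\in(0,1)$, the replacement $\bar p_n\leftrightarrow p_n$ costs only $O(n^{-2})$ in total variation, and by the ingredients of Theorem~2.1 the discrete scan of $(I_j^n)_{j\ne i}$ converges a.s.\ to the continuous scan $S_w$. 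This yields $\mathbb{P}(\mathcal{A}_n\mid F=\{i_n\})\to\alpha$ (forcing one subinterval to be empty is asymptotically negligible) and $\mathbb{P}(\mathcal{B}_n\mid F=\{i_n\})\to\mathbb{P}(\max_{0\le t\le 1-w}|(\Pi\cup\{u\})\cap(t,t+w]|\ge k)$, since the forced $1$ at coordinate $i_n$ is equivalent in the limit to adding a deterministic point $u$ to the Poisson process $\Pi$.

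Multiplying by $\mathbb{P}(F=\{i\})\sim\lambda^2/(2n^2)$, summing, and interpreting $\tfrac1n\sum_{i=1}^n f(i/n)$ as a Riemann sum for $\int_0^1 f(u)\,du$ produces
$$\frac{2n}{\lambda^2}(\beta_n-\alpha_n)\longrightarrow -\alpha+\int_0^1 \mathbb{P}\!\left(\max_{0\le t\le 1-w}\left|(\Pi\cup\{u\})\cap(t,t+w]\right|\ge k\right)du,$$
the stated limit. The main technical obstacle is upgrading the pointwise convergence of $\mathbb{P}(\mathcal{B}_n\mid F=\{i\})$ to Riemann-sum convergence: one must show that the convergence is sufficiently uniform in $i$ (or at least dominatedly convergent) so that the $1/n$-discretisation error vanishes. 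The key ingredients are the same as in Theorem~2.1---discarding the $O(n^{-2})$-probability configurations with doubly-occupied subintervals so that the forced indicator is genuinely equivalent to an added continuous-Poisson point---but the bookkeeping is considerably lighter here, since only one extra coordinate must be tracked rather than the pairs of consecutive subintervals analysed there.
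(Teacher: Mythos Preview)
Your strategy---isolate the case of exactly one ``extra'' $1$ and identify the limit as the effect of adding a single uniform point to $\Pi$---is the same as the paper's, and your argument is correct. The difference is in the coupling, and the paper's choice is what lets it avoid the very Riemann-sum obstacle you flag.

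You use the monotone coupling $I_i^n=\mathbf{1}\{U_i\le p_n\}\le \mathbf{1}\{U_i\le q_n\}=H_i^n$, so the extra indicators $F_i=H_i^n-I_i^n$ are \emph{dependent} on $I^n$: conditioning on $\{F=\{i\}\}$ forces $I_i^n=0$ and shifts the other success parameters to $\bar p_n$, which is why you must then control a Riemann sum in $i$. The paper instead introduces auxiliary Bernoullis $L_i^n$ \emph{independent} of $I^n$, with $\mathbb{P}(L_i^n=1)=1-(1-\lambda/n)e^{\lambda/n}$, and sets $\tilde L_i^n=\max(I_i^n,L_i^n)$; then $\mathcal{L}(\tilde L^n)=\mathcal{L}(H^n)$ exactly, and conditioning on $S_n:=\sum_i L_i^n=1$ leaves the law of $I^n$ untouched. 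This yields the exact identity
\[
\mathcal{L}\bigl(\tilde L_1^n,\dots,\tilde L_n^n\,\big|\,S_n=1\bigr)=\mathcal{L}\bigl(\hat I_1^n,\dots,\hat I_n^n\bigr),\qquad \hat I_i^n=\mathbf{1}\Bigl\{(\Pi\cup\{Q\})\cap\Bigl(\tfrac{i-1}{n},\tfrac{i}{n}\Bigr]\ne\emptyset\Bigr\},
\]
with $Q$ uniform on $(0,1]$ independent of $\Pi$. The limit then follows from a.s.\ convergence of $\mathbf{1}_{\hat{\mathcal B}_n}$ on the common $\Pi$-space plus bounded convergence---no Riemann sum, no $\bar p_n$ correction, no forced zero. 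Your route can certainly be completed (e.g.\ condition on $\sum_i F_i=1$ rather than on the location, absorbing the sum into a uniform random index, and note that the $\bar p_n$ and forced-zero perturbations are $O(n^{-1})$ in total variation), but the independent coupling buys a genuinely shorter argument.
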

\begin{proof} Let $L_i^n, i = 1,\dots,n$ be {\it i.i.d.} and independent
of $I_1^n,\dots,I_{n}^n$ such that $\mathbb{P}(L_i^n=0) = (1 -
\lambda/n)e^{\lambda/n} = 1-\mathbb{P}(L_i^n=1)$. Letting
$\tilde{L}_i^n = \max\{I_i^n, L_i^n\}$ and noting that
$\mathbb{P}(\tilde{L}_i^n=0) = \mathbb{P}(I_i^n=0\mbox{ and
}L_i^n=0) = 1 - \lambda/n = \mathbb{P}(H_i^n=0)$, we have
$\mathcal{L}(\tilde{L}_1^n,\dots,\tilde{L}_{n}^n)
=\mathcal{L}(H_1^n,\dots,H_{n}^n)$
 where $\mathcal{L}(\mathbf{V})$ denotes the law of a  random vector $\mathbf{V}$,
so that $\beta_n=\mathbb{P}(\mathcal{B}_n) =
\mathbb{P}(\widetilde{\mathcal{B}}_n)$ where
$$\widetilde{\mathcal{B}}_n = \left\{\max_{i=1,\dots,n-nw+1}\sum_{r=i}^{i+nw-1}\tilde{L}_r^n~\ge~ k\right\}.$$
Since $I_i^n = 1$ implies $\tilde{L}_i^n = 1$, we have
$\mathcal{A}_n\subset\widetilde{\mathcal{B}}_n$. Letting $S_n =
\sum_{i=1}^{n}L_i^n$ and noting that
$\widetilde{\mathcal{B}}_n\cap\{S_n=0\}=\mathcal{A}_n\cap\{S_n=0\}$
and that
$$\mathbb{P}(S_n=0) = 1-\frac{\lambda^2}{2n} +
O(n^{-2}),~\mathbb{P}(S_n=1) = \frac{\lambda^2}{2n} +
O(n^{-2}),~\mathbb{P}(S_n\ge 2) = O(n^{-2}),$$ we have
\begin{align}
\beta_n = \mathbb{P}(\widetilde{\mathcal{B}}_n) =&
~\mathbb{P}(\widetilde{\mathcal{B}}_n|S_n=0)\mathbb{P}(S_n=0) +
\mathbb{P}(\widetilde{\mathcal{B}}_n|S_n=1)\mathbb{P}(S_n=1)\nonumber\\
& ~+ \mathbb{P}(\widetilde{\mathcal{B}}_n|S_n\ge 2)\mathbb{P}(S_n\ge
2)\nonumber\\ =&
~\mathbb{P}(\mathcal{A}_n|S_n=0)\left(1-\frac{\lambda^2}{2n}\right)
+ \mathbb{P}(\widetilde{\mathcal{B}}_n|S_n=1)\frac{\lambda^2}{2n} +
O(n^{-2})\nonumber\\
 =&
~\alpha_n\left(1-\frac{\lambda^2}{2n}\right) +
\mathbb{P}(\widetilde{\mathcal{B}}_n|S_n=1)\frac{\lambda^2}{2n} +
O(n^{-2}).\label{eq:eq_3_12}
\end{align}
Claim that
\begin{equation}
\lim_{n\to \infty}
\mathbb{P}(\widetilde{\mathcal{B}}_n|S_n=1)=\displaystyle\int_0^1
\mathbb{P}\left(\max_{0\le
 t\le 1-w}\Big|(\Pi\cup\{u\})\cap(t,t+w] \Big|\ge k \right) du,\label{new}
 \end{equation}
 which together with (\ref{eq:eq_3_12}) yields the desired result.

It remains to establish the claim (\ref{new}). Let $Q$ be a random
point which is uniformly distributed on $(0,1]$ and independent of
$\Pi$. Let
 $$\hat{I}^n_i= \mathbf{1}\Big\{(\Pi \cup \{Q\}) \cap \Big(\frac{i-1}{n},\frac{i}{n}\Big]\ne \emptyset\Big\},\;\; i=1,\dots,n.$$
It is readily seen that
$\mathcal{L}(\tilde{L}^n_1,\dots,\tilde{L}^n_n\mid
S_n=1)=\mathcal{L}(\hat{I}^n_1,\dots,\hat{I}^n_n)$, which implies
$\mathbb{P}(\widetilde{\mathcal{B}}_n|S_n=1)=\mathbb{P}(\hat{\mathcal{B}}_n),$
where
$$\hat{\mathcal{B}}_n= \left\{\max_{i=1,\dots,n-nw+1}\sum_{r=i}^{i+nw-1}\hat{I}_r^n~\ge~ k\right\}.$$
Since $\mathbf{1}_{\hat{\mathcal{B}}_n}$ converges a.s. to
$\mathbf{1}\{\max_{0\le t \le 1-w} |(\Pi \cup \{Q\})\cap (t,t+w]|
\ge k\}$, we have
\begin{align*}
\lim_{n\to \infty} \mathbb{P}(\hat{\mathcal{B}}_n)&=\mathbb{P}\Big(\max_{0\le t \le 1-w} \Big|(\Pi \cup \{Q\})\cap (t,t+w]\Big| \ge k\Big)\\
&=\displaystyle\int_0^1 \mathbb{P}\left(\max_{0\le
 t\le 1-w}\Big|(\Pi\cup\{u\})\cap(t,t+w] \Big|\ge k \right) du,
 \end{align*}
the claim (\ref{new}) follows. This completes the proof of the
lemma.
\end{proof}

\begin{thm}
\begin{flalign}\lim_{n\to\infty}\frac{2n}{\lambda^2}(\alpha -
\beta_n) ~=~ &
\displaystyle\frac{1}{\lambda}\mathbb{E}\left[\nu(\Pi)+\tilde{\nu}(\Pi)\raisebox{4mm}{}\right]+\alpha\vspace*{1mm}\nonumber\\
& \hspace*{0mm}-\displaystyle\int_0^1 \mathbb{P}\left(\max_{0\le
t\le 1-w}\left|(\Pi\cup\{u\})\cap(t,t+w]\raisebox{5mm}{}\right|\ge
k\right)du.\nonumber
\end{flalign}
\end{thm}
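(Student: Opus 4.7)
The statement is essentially a corollary that follows by combining Theorem 2.1 and Lemma 2.1, so my plan is to decompose $\alpha-\beta_n$ telescopically through the intermediate quantity $\alpha_n$ and then apply the two known limit results directly.

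First I would write
\begin{equation*}
\frac{2n}{\lambda^2}(\alpha-\beta_n)
=\frac{2n}{\lambda^2}(\alpha-\alpha_n)-\frac{2n}{\lambda^2}(\beta_n-\alpha_n).
\end{equation*}
For the first term, Theorem 2.1 gives $n(\alpha-\alpha_n)\to\tfrac{\lambda}{2}\mathbb{E}[\nu(\Pi)+\tilde\nu(\Pi)]$, so multiplying by $2/\lambda^2$ yields the limit $\tfrac{1}{\lambda}\mathbb{E}[\nu(\Pi)+\tilde\nu(\Pi)]$. For the second term, Lemma 2.1 says it converges to $-\alpha+\int_0^1\mathbb{P}(\max_{0\le t\le 1-w}|(\Pi\cup\{u\})\cap(t,t+w]|\ge k)\,du$. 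Subtracting and collecting signs produces exactly the right-hand side in the theorem's statement.

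There is essentially no obstacle here: the only things to check are that the two limits exist simultaneously for the same sequence $n=mq$ (which they do, since both Theorem 2.1 and Lemma 2.1 are stated for this same restricted sequence), and that the standard arithmetic of limits applies to the difference of two convergent sequences. In particular, no dominated convergence or coupling argument needs to be redone, because the heavy lifting (the change-of-measure identities establishing $\rho_n^{-1}P_i^{(n)}$ and $\tilde\rho_n^{-1}\widetilde P_i^{(n)}$, and the coupling $\tilde L_i^n=\max\{I_i^n,L_i^n\}$ used to compare $\beta_n$ with $\alpha_n$) has already been done in the proofs of Theorem 2.1 and Lemma 2.1.

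So the write-up will be essentially one or two lines: state the identity $\alpha-\beta_n=(\alpha-\alpha_n)-(\beta_n-\alpha_n)$, scale by $2n/\lambda^2$, invoke Theorem 2.1 and Lemma 2.1 for the two pieces, and conclude.
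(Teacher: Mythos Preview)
Your proposal is correct and matches the paper's own proof essentially line for line: the paper writes $\frac{2n}{\lambda^2}(\alpha-\beta_n)=\frac{2n}{\lambda^2}(\alpha-\alpha_n)-\frac{2n}{\lambda^2}(\beta_n-\alpha_n)$ and then invokes Theorem~2.1 and the lemma on $\beta_n-\alpha_n$ to conclude. The only cosmetic discrepancy is the lemma's label (the paper numbers it 2.2 rather than 2.1), but the content you cite is exactly right.
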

\begin{proof} Note that
$$\frac{2n}{\lambda^2}(\alpha -
\beta_n) = \frac{2n}{\lambda^2}(\alpha - \alpha_n) -
\frac{2n}{\lambda^2}(\beta_{n} - \alpha_{n}),$$ which together with
Theorem 2.1 and Lemma 2.2 yields the desired result.
\end{proof}

\begin{rem}%{\em Remark 2.}
Similarly to (\ref{alpha}), it can be shown that
\begin{equation}
\alpha-\beta_n=C_\beta n^{-1}+O(n^{-2}),\label{beta}
\end{equation}
where
\begin{align}
C_\beta&=\frac{1}{2}\lambda^2 \alpha+\frac{\lambda}{2}\mathbb{E}\left[\nu(\Pi)+\tilde{\nu}(\Pi)\raisebox{4mm}{}\right]\notag\\
&\;\;\;\;\;-\frac{\lambda^2}{2}\int_0^1 \mathbb{P}\left(\max_{0\le
t\le 1-w}\Big|(\Pi\cup\{u\})\cap(t,t+w]\Big|\ge
k\right)du.\label{beta1}
\end{align}
\end{rem}

\section{The conditional case}

Following the notation of Section 2, $\Pi = \{Q_1,\dots,Q_M\}$ is a
Poisson point process of intensity $\lambda$ on $(0,1]$, where $M$
is a Poisson random variable with mean $\lambda$. For given $N\ge
k=2,3,\dots$, we are interested in approximating
$$\gamma^{(N)}:=P(k;N,w) = \mathbb{P}\left(\left.\max_{0\le t\le 1-w}\left|\Pi\cap(t,t+w]\raisebox{4mm}{}\right|\ge k\right|M=N\right).$$
Conditional on $M=N$, the $N$ points $0<Q_1<\dots<Q_N<1$ are the
order statistics of $N$ independent and uniformly distributed random
variables on $(0,1]$. Denote by $\Pi^N$ a set of $N$ {\it i.i.d.}
uniform random variables on $(0,1]$. Then
$\mathcal{L}(\Pi^N)=\mathcal{L}(\Pi | M=N)$ and $\gamma^{(N)} =
\mathbb{P}(\mathcal{E}^N)$ where
$$\mathcal{E}^N=\mathcal{E}^N_{k,w}: = \left\{\max_{0\le t\le
1-w}\left|\Pi^N\cap(t,t+w]\raisebox{4mm}{}\right|\ge k\right\}.$$ As
in Section 2, with $n= mq$ ($m=1,2,\dots$), the interval $(0,1]$ is
partitioned into $n$ subintervals of length $n^{-1}$, so that a
window of size $w=p/q$ covers $nw$ subintervals. As an approximation
to $N$ points uniformly distributed on $(0,1]$, we randomly select
$N$ of the $n$ subintervals and assign a point to each of them. Let
$J_i^n = 1$ or 0 according to whether or not the $i$-th subinterval
is selected (so as to contain a point). Then $\sum_{i=1}^{n}J_i^n =
N$ and for $h_i = 0$ or 1,
$$\mathbb{P}_N(J_i^n = h_i, i = 1,\dots,n) = \left\{
    \begin{array}{ll}
      1{\mbox{\LARGE /}}{\mbox{\large ${n\choose N}$}}, & \hbox{if~ $\sum_{i=1}^{n}h_i = N$,} \\
      0, & \hbox{otherwise,}
    \end{array}
  \right.
$$
where the subscript $N$ in $\mathbb{P}_N$ signifies that there are
$N$ 1's in $J_1^n,\dots,J_{n}^n$.  While in Section 2,
$(I_1^n,\dots,I_{n}^n)$  is defined in terms of $\Pi$ in order to
make use of a coupling argument, there is no natural way to define
$(J_1^n,\dots,J_{n}^n)$ and $\Pi^N$ on the same probability space.
As no danger of confusion may arise, we will use the same
probability measure notation $\mathbb{P}_N$ for both the probability
space where $\Pi^N$ is defined and the probability space where
$(J^n_1,\dots,J^n_n)$ is defined. Let
\begin{equation}
\gamma_n^{(N)} =
\mathbb{P}_N(\mathcal{E}_n^N)\;,\;\;\mbox{where}\;\; \mathcal{E}_n^N
:= \left\{\max_{i=1,\dots,n-nw+1}\sum_{r=i}^{i+nw-1}J_r^n~\ge~
k\right\}.\label{eq:eq_4_0}
\end{equation}

\begin{thm}%[A theorem]
For $N$ fixed and $n = mq\;(m=1,2,\dots)$,
\begin{eqnarray}\lim_{n\to\infty}n(\gamma^{(N)}-\gamma_n^{(N)})
&=&\frac{1}{2}N(N-1)(\gamma^{(N-1)}-\gamma^{(N)})
\nonumber\\
&&\hspace*{10mm}
+~\frac{1}{2}N\mathbb{E}\left[\nu(\Pi)+\tilde{\nu}(\Pi)\left|M=N-1\raisebox{4mm}{}\right.\right].\nonumber
\end{eqnarray}
\end{thm}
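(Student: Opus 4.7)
My plan is to combine three pieces: a conditioning argument bridging $\gamma_n^{(N)}$ to a Poisson-style discrete analog of $\gamma^{(N)}$; a conditional version of the argument in Theorem~2.1; and a combinatorial change of measure from $\mathbb{P}_N$ to $\mathbb{P}_{N-1}$ that plays the role of the independence-based identity $P_i^{(n)}=\rho_n P_i^{\prime(n)}$ in Section~2.

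Define $I_i^n := \mathbf{1}\{\Pi^N \cap ((i-1)/n,i/n]\ne\emptyset\}$ under $\mathbb{P}_N$ and set $\mathcal{A}_n^N := \{S_{w,I}^{(n)}\ge k\}$, so that $\mathcal{A}_n^N\subset\mathcal{E}^N$. Write
\[
n(\gamma^{(N)}-\gamma_n^{(N)}) = n\,\mathbb{P}_N(\mathcal{E}^N\cap(\mathcal{A}_n^N)^c) + n\bigl(\mathbb{P}_N(\mathcal{A}_n^N)-\gamma_n^{(N)}\bigr).
\]
For the second (bridging) term, condition on the event $\mathcal{D}_n^N$ that the $N$ points lie in distinct subintervals, whose $\mathbb{P}_N$-probability equals $1-\frac{N(N-1)}{2n}+O(n^{-2})$. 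On $\mathcal{D}_n^N$ the vector $(I_1^n,\dots,I_n^n)$ is uniform over configurations with exactly $N$ ones, matching the law of $(J_1^n,\dots,J_n^n)$; on the dominant one-collision alternative, it is equal in law to the $(N-1)$-point $J$-configuration. Combined with $\gamma_n^{(N)}\to\gamma^{(N)}$, this yields $n(\mathbb{P}_N(\mathcal{A}_n^N)-\gamma_n^{(N)}) \to \tfrac12 N(N-1)(\gamma^{(N-1)}-\gamma^{(N)})$, which is exactly the first term in the target limit.

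For the first term I would mimic the proof of Theorem~2.1 under $\mathbb{P}_N$: partition by $\mathcal{G}_1$, $\mathcal{G}_{2,i}$, $\mathcal{G}_3$, note that $\mathbb{P}_N(\mathcal{G}_3)=O(n^{-2})$ since $N$ is fixed, and reuse the within-subinterval uniform-position argument to extract the factor $\tfrac12$ from the $\mathcal{G}_1$ contribution. This produces the conditional analog of (\ref{eq:eq_3_8}),
\[
\mathbb{P}_N(\mathcal{E}^N\cap(\mathcal{A}_n^N)^c) = \tfrac12\sum_{i=1}^{n-nw}P_i^{(n,N)} + \sum_{i=1}^n \widetilde P_i^{(n,N)} + O(n^{-2}),
\]
where $P_i^{(n,N)}, \widetilde P_i^{(n,N)}$ are the conditional analogs of (\ref{eq:eq_3_5})--(\ref{eq:eq_3_6}). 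The crucial step replaces $P_i^{(n)}=\rho_n P_i^{\prime(n)}$ with a combinatorial identity: under $\mathbb{P}_N$ restricted to $\mathcal{G}_1$, each valid indicator pattern carries probability $N!/n^N$, while its ``erase $I_{i+nw}^n$'' counterpart under $\mathbb{P}_{N-1}$ carries probability $(N-1)!/n^{N-1}$; thus $P_i^{(n,N)} = (N/n)\,P_i^{\prime(n,N-1)}$, and summing gives $\sum_i P_i^{\prime(n,N-1)} = \mathbb{E}_{N-1}[\nu^{(n)}(\Pi^{N-1})] \to \mathbb{E}[\nu(\Pi)\mid M=N-1]$. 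An analogous computation, using the ratio $\mathbb{P}_N(\tilde I_i^n=2)/\mathbb{P}_{N-1}(I_i^n=1) = N/(2n)+O(n^{-2})$ as the substitute for $\tilde\rho_n$, handles $\widetilde P_i^{(n,N)}$ and delivers the $\tilde\nu$-contribution. Together they give $n\,\mathbb{P}_N(\mathcal{E}^N\cap(\mathcal{A}_n^N)^c)\to \tfrac{N}{2}\,\mathbb{E}[\nu(\Pi)+\tilde\nu(\Pi)\mid M=N-1]$, which combined with the bridging term completes the proof.

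The main obstacle is this change-of-measure step. Without Bernoulli independence, the clean $\rho_n$- and $\tilde\rho_n$-factors must be extracted from the multinomial structure of $N$ uniform points in $n$ cells, and one must verify that the conditional law of $(I_j^n)_{j\ne i+nw}$ given $I_{i+nw}^n=1$ under $\mathbb{P}_N$ matches its law given $I_{i+nw}^n=0$ under $\mathbb{P}_{N-1}$ (up to $O(n^{-1})$ discrepancies absorbed in the $O(n^{-2})$ remainder), and similarly that a configuration with $\tilde I_i^n=2$ under $\mathbb{P}_N$ corresponds to a single-point duplication in the $(N-1)$-point process -- exactly the construction of $\tilde\nu$.
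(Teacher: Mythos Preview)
Your proposal is correct and shares all the essential ingredients with the paper's proof---partitioning by the number of within-cell collisions, extracting the factor $\tfrac12$ from the uniform within-subinterval positions, and passing from $\mathbb{P}_N$ to $\mathbb{P}_{N-1}$ by a combinatorial change of measure---but the decomposition is organized differently. You insert the intermediate event $\mathcal{A}_n^N=\{S_{w,I}^{(n)}\ge k\}$ (the indicator-scan of $\Pi^N$, mirroring Section~2.1) and split $n(\gamma^{(N)}-\gamma_n^{(N)})$ into a ``scan error'' piece $n\,\mathbb{P}_N(\mathcal{E}^N\cap(\mathcal{A}_n^N)^c)$ and a ``bridging'' piece $n(\mathbb{P}_N(\mathcal{A}_n^N)-\gamma_n^{(N)})$; the latter cleanly isolates the term $\tfrac12 N(N-1)(\gamma^{(N-1)}-\gamma^{(N)})$ because, conditional on a single collision, the indicator vector is exactly uniform over $(N{-}1)$-one patterns. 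The paper instead conditions at the top level on $U_1=\{\tilde J_j^n\le 1\ \forall j\}$: since $\mathcal{L}(\tilde J_1^n,\dots,\tilde J_n^n\mid U_1)=\mathcal{L}(J_1^n,\dots,J_n^n)$ exactly, one gets $\mathbb{P}_N(\tilde{\mathcal{E}}_n\mid U_1)=\gamma_n^{(N)}$ identically and an exact change-of-measure ratio $\binom{n}{N-1}/\binom{n}{N}=N/(n-N+1)$, rather than your asymptotic $N/n$ from multinomial weights $N!/n^N$ (the difference is $O(n^{-2})$ and harmless). In the paper the $\gamma^{(N-1)}-\gamma^{(N)}$ term then emerges indirectly from combining the $U_1$ weight $1-\tfrac{N(N-1)}{2n}$ on $\gamma_n^{(N)}$ with the $U_2$ contribution $\tfrac{N(N-1)}{2n}\gamma^{(N-1)}$, the latter packaged together with the $\tilde\nu$ term via $\nu_2^{(n)}$. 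Your route makes the analogy with Theorem~2.1 more transparent; the paper's avoids the auxiliary object $\mathcal{A}_n^N$ and works with exact identities before taking limits.
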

\begin{proof} For notational simplicity, the superscript $N$ in
$\mathcal{E}^N$ and $\mathcal{E}_n^N$ is suppressed while to avoid
possible confusion, $\mathbb{P}_N$ is not abbreviated to
$\mathbb{P}$ as later a change-of-measure argument requires
consideration of $\mathbb{P}_{N-1}$. Let $\tilde{J}_i =
\left|\Pi^N\cap((i-1)/N,i/N]\raisebox{3mm}{}\right|$, $i=1,\dots,n$,
and define the (disjoint)  events
$$
\begin{array}{l}U_1 = \{\tilde{J}_j^n\le 1, j = 1,\dots,n\},\vspace*{2mm}\\
U_2 = \bigcup_{i=1}^n U_{2,i},\; U_{2,i} = \{\tilde{J}_i^n=2, \tilde{J}_j^n\le 1\mbox{ for all $j\ne i$}\},\vspace*{2mm}\\
U_3 = \{\tilde{J}_j^n = \tilde{J}_{j'}^n = 2 \mbox{ for some $j\ne
j'$}\}\cup\{\tilde{J}_j^n\ge 3\mbox{ for some $j$}\}.
\end{array}
$$
We have
%\begin{flalign}
\begin{eqnarray}
\mathbb{P}_N(U_1) &=&
\displaystyle\prod_{i=1}^N\left(1-\frac{i-1}{n}\right) ~=~
1 - \frac{N(N-1)}{2n} + O(n^{-2}),\label{eq:eq_4_1}\vspace*{1mm}\\
\mathbb{P}_N(U_2) &=& \displaystyle{N\choose
2}n^{-1}\prod_{j=1}^{N-2}\left(1-\frac{j}{n}\right) ~=~
\frac{N(N-1)}{2n} + O(n^{-2}),\label{U2}
\end{eqnarray}
%\end{flalign}
and $\mathbb{P}_N(U_3) = O(n^{-2})$, so that
\begin{eqnarray}
\gamma^{(N)} &=& \mathbb{P}_N(\mathcal{E}^N) =
\mathbb{P}_N(\mathcal{E})\nonumber\vspace*{2mm}\\ &=&
\mathbb{P}_N(\mathcal{E}\cap
U_1) + \mathbb{P}_N(\mathcal{E}\cap U_2) + \mathbb{P}_N(\mathcal{E}\cap U_3)\nonumber\vspace*{2mm}\\
&=& \mathbb{P}_N(\mathcal{E}|U_1)\mathbb{P}_N(U_1) +
\mathbb{P}_N(\mathcal{E}\cap U_2) + O(n^{-2}).\label{eq:eq_4_2}
\end{eqnarray}
We first work on $\mathbb{P}_N(\mathcal{E}|U_1)$. Write $\mathcal{E}
= \widetilde{\mathcal{E}}_n\cup (\mathcal{E}\cap
\widetilde{\mathcal{E}}_n^c)$ where
$$
\begin{array}{l}\widetilde{\mathcal{E}}_n := \displaystyle\left\{\max_{i = 1,\dots,n-nw+1}\sum_{r=i}^{i+nw-1}\tilde{J}_r^n\ge k\right\}~(\subset \mathcal{E}).
\end{array}
$$
Note that given $U_1$, the conditional distribution of
$(\tilde{J}_1^n,\dots,\tilde{J}_n^n)$ is equal to the distribution
of $(J_1^n,\dots,J_n^n)$ (i.e.
$\mathcal{L}(\tilde{J}_1^n,\dots,\tilde{J}_n^n | U_1) =
\mathcal{L}(J_1^n,\dots,J_n^n))$, and that
$\widetilde{\mathcal{E}}_n$ depends on
$(\tilde{J}_1^n,\dots,\tilde{J}_n^n)$ in the same way that
$\mathcal{E}_n=\mathcal{E}^N_n$ does on $(J_1^n,\dots,J_n^n)$ ({\em
cf.} (\ref{eq:eq_4_0})). So we have
$\mathbb{P}_N(\widetilde{\mathcal{E}}_n|U_1) =
\mathbb{P}_N(\mathcal{E}_n) = \gamma_n^{(N)}$, and
\begin{flalign}
\hspace*{5mm}\mathbb{P}_N(\mathcal{E}|U_1) ~=~ &
\mathbb{P}_N(\widetilde{\mathcal{E}}_n|U_1) +
\mathbb{P}_N(\mathcal{E}\cap \widetilde{\mathcal{E}}_n^c)|U_{1})\nonumber\\
=~ & \gamma_n^{(N)} +
\mathbb{P}_N(\mathcal{E}\cap\widetilde{\mathcal{E}}_n^c\;|U_1).\label{eq:eq_4_3}
\end{flalign}
If $\tilde{J}_i^n = 1$, denote the only point of $\Pi^N$ in
$((i-1)/n,i/n]$ by $Q_{(i)}$, whose location is uniformly
distributed over $((i-1)/n,i/n]$. When $\tilde{J}_i^n\le 1$ for all
$i$ (i.e.  on the event $U_1$), in order for
\begin{eqnarray}
\mathcal{E}\cap\widetilde{\mathcal{E}}_n^c ~=~ \left\{\max_{0\le
t\le
1-w}\left|\Pi^N\cap(t,t+w]\raisebox{4mm}{}\right|\ge k\right\}\hspace*{30mm}\nonumber\vspace*{1mm}\\
\hspace*{30mm}\cap\left\{\max_{i=0,1,\dots,n-nw}\left|\Pi^N\cap\left(\frac{i}{n},
\frac{i}{n}+w\right]\raisebox{4mm}{}\right|<
k\right\}\nonumber\end{eqnarray} to occur, there must exist some
pair $(i,i')$ with $i^\prime = i + nw$ such that
$\sum_{r=i+1}^{i^\prime-1}\tilde{J}_r^n = k-2$, $\tilde{J}_i^n =
\tilde{J}_{i^\prime}^n = 1$, and $Q_{(i^\prime)}-Q_{(i)}<w$. So we
have
$$\mathcal{E}\cap\widetilde{\mathcal{E}}_n^c \cap U_1 = \bigcup_{i=1}^{n-nw}U_{1,i}$$
where for $i=1,\dots,n-nw$,
\begin{eqnarray}
U_{1,i} ~=~  \widetilde{\mathcal{E}}_n^c\cap\left\{\tilde{J}_j^n\le 1\mbox{ for all $j$, }\sum_{r=i+1}^{i+nw-1}\tilde{J}_r^n = k-2,\right.\hspace*{10mm}\nonumber\vspace*{1mm}\\
\hspace*{30mm}\left.\tilde{J}_i^n = \tilde{J}_{i+nw}^n = 1,\;
Q_{(i+nw)}-Q_{(i)}<w\raisebox{6mm}{}\right\}.\nonumber\end{eqnarray}
Since
\begin{eqnarray}
&\displaystyle\sum_{1\le i<j\le n-nw}\mathbb{P}_N(U_{1,i}\cap
U_{1,j})\hspace*{55mm}\nonumber\vspace*{1mm}\\
\le&\displaystyle\sum_{1\le i<j\le n-nw}\mathbb{P}_N(\tilde{J}_i^n =
\tilde{J}_{i+nw}^n = \tilde{J}_j^n = \tilde{J}_{j+nw}^n = 1) =
O(n^{-2}),\nonumber
\end{eqnarray}
we have
\begin{eqnarray}
\mathbb{P}_N(\mathcal{E}\cap\widetilde{\mathcal{E}}_n^c\mid U_1)& ~=~ &  \sum_{i=1}^{n-nw}\mathbb{P}_N(U_{1,i}|U_1) + O(n^{-2})\nonumber\\
& ~=~ & \frac{1}{2}\sum_{i=1}^{n-nw}\mathbb{P}_N(U_{1,i}^\prime|U_1)
+ O(n^{-2}),\label{eq:eq_4_4}
\end{eqnarray}
where for $i=1,\dots,n-nw$,
$$U_{1,i}^\prime =  \widetilde{\mathcal{E}}_n^c\cap\left\{\tilde{J}_j^n\le 1\mbox{ for all $j$, }\sum_{r=i+1}^{i+nw-1}\tilde{J}_r^n =
k-2,\;\tilde{J}_i^n = \tilde{J}_{i+nw}^n = 1\right\}.$$ In
(\ref{eq:eq_4_4}), we have used the fact that for any given $h_j =
0$ or 1 ($j = 1,\dots,n$) with $\sum_{j=1}^n h_j = N$ and $h_i =
h_{i+nw} = 1$, conditional on $\tilde{J}_j^n = h_j$, $j =
1,\dots,n$, $Q_{(i)}$ and $Q_{(i+nw)}$ are independent and uniformly
distributed over $((i-1)/n,i/n]$ and $((i+nw-1)/n,(i+nw)/n]$,
respectively, so that $Q_{(i+nw)}-Q_{(i)}<w$ with probability $1/2$,
which implies $\mathbb{P}_N(U_{1,i}|U_1) =
\frac{1}{2}\mathbb{P}_N(U_{1,i}^\prime|U_1)$.

Note that $U_{1,i}^\prime, i = 1,\dots,n$ depend only on
$\tilde{J}_1^n,\dots,\tilde{J}_n^n$. Since
$\mathcal{L}(\tilde{J}_1^n,\dots,\tilde{J}_n^n|$ $U_1)$ $=
\mathcal{L}(J_1^n,\dots,J_n^n)$, we have
\begin{equation}
\mathbb{P}_N(U_{1,i}^\prime|U_1) = \mathbb{P}_N(V_i),\; i=
1,\dots,n-nw,\label{eq:eq_4_5}
\end{equation} where
$$V_i = \mathcal{E}_n^c\cap\left\{\sum_{r=i+1}^{i+nw-1}J_r^n = k-2, J_i^n = J_{i+nw}^n = 1\right\}.$$
(Note that $V_i$ depends on $(J_1^n,\dots,J_n^n)$ in the same way
that $U_{1,i}^\prime$ does on
$(\tilde{J}_1^n,\dots,\tilde{J}_n^n)$.)

 We will simplify $\sum_{i=1}^{n-nw} \mathbb{P}_N(U_{1,i}^\prime|U_1)=\sum_{i=1}^{n-nw} \mathbb{P}_N(V_i)$
 via a change-of measure argument. It is instructive to interpret the event $V_i$ as a collection
of configurations $(J_1^n,\dots, J_n^n)=(h_1,\dots,h_n)$ where
$(h_1,\dots,h_n)$ satisfies $h_r = 0$ or $1$ for $r=1,\dots,n$, and
$$\sum_{r=1}^{n}h_r = N,
\max_{j=1,\dots,n-nw+1}\sum_{r=j}^{j+nw-1}h_r<k,~
\sum_{r=i+1}^{i+nw-1}h_r = k-2,~h_i = h_{i+nw} = 1.$$ Let
$$
\begin{array}{ll}V_i^\ast =\left\{\displaystyle\sum_{r=1}^n J_r^n=N-1,\max_{j=1,\dots,n-nw+1}\sum_{r=j}^{j+nw-1}J_r^n<k, \right.\nonumber\vspace*{1mm}\\
\hspace*{15mm}\displaystyle\sum_{r=i+1}^{i+nw-1}J_r^n = k-2, J_i^n =
1, J_{i+nw}^n = 0,\mbox{ sum of any
$nw$}\vspace*{0mm}\\\hspace*{17mm}\left. \mbox{consecutive $J_r^n$
including $r = i+nw$ is at most
$k-2$}\raisebox{6mm}{}\right\},\nonumber
\end{array}
$$
which is interpreted as a collection of configurations
$(J_1^n,\dots, J_n^n)=(h_1^\ast,\dots,h_n^\ast)$, where
$(h_1^\ast,\dots,h_n^\ast)$ satisfies $h_r^\ast = 0$ or 1 for
$r=1,\dots, n$, and
$$
\begin{array}{l}
\displaystyle\sum_{r=1}^{n}h_r^\ast = N-1,
\max_{j=1,\dots,n-nw+1}\sum_{r=j}^{j+nw-1}h_r^\ast < k,
\sum_{r=i+1}^{i+nw-1}h_r^\ast = k-2,\vspace*{2mm}\\
\mbox{$h_i^\ast = 1, h_{i+nw}^\ast = 0$, and sum of any $nw$
consecutive $h_r^\ast$ including $r$}\vspace*{2mm}\\
\mbox{$= i+nw$ is at most $k-2$.}
\end{array}$$
If a configuration $(J_1^n,\dots, J_n^n)=(h_1,\dots,h_n)$ is in
$V_i$, then the configuration $(J_1^n,\dots,
J_n^n)=(h_1^\ast,\dots,h_n^\ast)$ is in $V_i^\ast$ provided
$h_r^\ast = h_r$ for all $r\ne i+nw$ and $h_{i+nw} = 1$,
$h_{i+nw}^\ast = 0$. In other words, a configuration is in $V_i$ if
and only if with the $(i+nw)$-th entry replaced by 0, it is in
$V_i^\ast$. Note that the number of nonzero entries for a
configuration in $V_i^\ast$ equals $N-1$. Recall that the notation
$\mathbb{P}_N$ ($\mathbb{P}_{N-1}$, {\it resp.}) denotes the
probability measure for $(J_1^n,\dots,J_{n}^n)$ with
$\sum_{r=1}^{n}J_r^n = N$ ($\sum_{r=1}^{n}J_r^n = N-1$, {\it
resp.}). It follows that
$$\frac{\mathbb{P}_N(V_i)}{~\mathbb{P}_{N-1}(V_i^\ast)~} ~=~ \frac{~{n\choose N-1}~}{{n\choose N}} ~=~ \frac{N}{n-N+1}.$$
Therefore,
\begin{flalign}
\sum_{i=1}^{n-nw}\mathbb{P}_N(V_i) ~=~ &\frac{N}{n-N+1}\sum_{i=1}^{n-nw}\mathbb{P}_{N-1}(V_i^\ast)\nonumber\\
=~ &
\frac{N}{n-N+1}\mathbb{E}_{N-1}\left[\nu_1^{(n)}(J_1^n,\dots,J_{n}^n)\raisebox{4mm}{}\right],\label{eq:eq_4_6}
\end{flalign}
where $$
\begin{array}{l} \nu_1^{(n)}(J_1^n,\dots,J_{n}^n) =
\displaystyle\sum_{i=1}^{n-nw}\mathbf{1}\left\{\displaystyle\max_{j=1,\dots,n-nw+1}\sum_{r=j}^{j+nw-1}J_r^n<k,\sum_{r=i+1}^{i+nw-1}J_r^n
= k-2,\right.\vspace*{2mm}\\  \hspace*{42mm}J_i^n = 1, J_{i+nw}^n
= 0,\mbox{ sum of any $nw$ consecutive $J_r^n$}\vspace*{1mm}\\
\hspace*{42mm}\left.\mbox{including $r = i+nw$ is at most
$k-2$}\raisebox{6mm}{}\right\}.
\end{array}
$$
By (\ref{eq:eq_4_3})--(\ref{eq:eq_4_6}),
\begin{flalign}
\mathbb{P}_N(\mathcal{E}|U_1) ~=~ &\gamma_n^{(N)} + \frac{1}{2}\frac{N}{~n-N+1~}\mathbb{E}_{N-1}\left[\nu_1^{(n)}(J_1^n,\dots,J_{n}^n)\raisebox{4mm}{}\right] + O(n^{-2})\nonumber\vspace*{1mm}\\
=~ & \gamma_n^{(N)} + \frac{N}{~2n~}\mathbb{E}\left[\nu(\Pi)\left|M
= N-1\raisebox{4mm}{}\right.\right] + o(n^{-1}),\label{eq:eq_4_7}
\end{flalign}
since
$\displaystyle\lim_{n\to\infty}\mathbb{E}_{N-1}\left[\nu_1^{(n)}(J_1^n,\dots,J_{n}^n)\raisebox{4mm}{}\right]
=
\mathbb{E}\left[\nu(\Pi)\left|M=N-1\raisebox{4mm}{}\right.\right]$.

Next, we deal with $\mathbb{P}_N(\mathcal{E}\cap U_2)$, the second
term on the right-hand side of (\ref{eq:eq_4_2}). Recall that $U_2$
is the event that exactly one of $\tilde{J}_1^n,\dots,\tilde{J}_n^n$
equals 2 and the others are all less than 2. For
$(\mathcal{E}\setminus\widetilde{\mathcal{E}}_n)\cap U_2$ to occur,
there must exist either some $(i,i^\prime)$ with $i^\prime=i+nw$ and
$(\tilde{J}^n_i,\tilde{J}^n_{i^\prime}) \in \{(1,2),(2,1)\}$ or
$(i,i^\prime,i^{\prime\prime})$ with $i^\prime=i+nw$ and
$(\tilde{J}^n_i,\tilde{J}^n_{i^\prime},\tilde{J}^n_{i^{\prime\prime}})=(1,1,2)$.
 This implies that
$\mathbb{P}_N((\mathcal{E}\setminus\widetilde{\mathcal{E}}_n)\cap
U_2) = O(n^{-2})$, so that
\begin{equation}
\mathbb{P}_N(\mathcal{E}\cap U_2) =
\mathbb{P}_N(\widetilde{\mathcal{E}}_n\cap U_2) + O(n^{-2}).
\label{eq:eq_4_8}
\end{equation}
Again we interpret $U_2$ as a collection of configurations
$(\tilde{J}_1^n,\dots,\tilde{J}_n^n)=( h_1^\prime,\dots,h_n^\prime)$
where $\sum_{j=1}^{n}h_j^\prime = N$ and all $h_i^\prime\le 1$
except for one $h_i^\prime = 2$. Fix $(h_1,\dots,h_n)$ with all $h_i
=0$ or 1 and $\sum_{i=1}^n h_i = N-1$, which is considered as a
configuration for $(J_1^n,\dots,J_n^n)$ (under $\mathbb{P}_{N-1}$).
This configuration corresponds to $N-1$ configurations in $U_2$ by
replacing one of the $N-1$ $h_i = 1$ with $h_i^\prime = 2$. The
probability of each of the latter $N-1$ configurations for
$(\tilde{J}_1^n,\dots,\tilde{J}_n^n)$ (under $\mathbb{P}_N$) equals
$$\frac{N!}{~2n^N~} ~=~ \frac{\xi_n^N}{~{n\choose N-1}~} ~=~ \xi_n^N\mathbb{P}_{N-1}(J_i^n=h_i,i=1,\dots,n),$$
where
\begin{equation}
\xi_n^N ~=~ \frac{~N!{n\choose N-1}~}{~2n^N~} ~=~ \frac{N}{~2n~} +
O(n^{-2}). \label{eq:eq_4_9}
\end{equation}
Thus for the fixed configuration
$(J_1^n,\dots,J_n^n)=(h_1,\dots,h_n)$ (under $\mathbb{P}_{N-1}$),
among the corresponding $N-1$ configurations for
$(\tilde{J}_1^n,\dots,\tilde{J}_n^n)$ (under $\mathbb{P}_{N}$),
 the sum of the probabilities of those  in $\widetilde{\mathcal{E}}_n\cap U_2$ equals
\begin{equation}
\xi_n^N\nu_2^{(n)}(h_1,\dots,h_{n})\mathbb{P}_{N-1}(J_i^n = h_i,
i=1,\dots,n) \label{eq:eq_4_10}
\end{equation}
where
\begin{flalign}
\nu_2^{(n)}(h_1,\dots,h_{n}) ~=~ &
(N-1)\mathbf{1}\left\{\max_{i=1,\dots,n-nw+1}\sum_{r=i}^{i+nw-1}h_r\ge
k\right\}\nonumber\\
&+~\sum_{i=1}^{n}\mathbf{1}\left\{\max_{i=1,\dots,n-nw+1}\sum_{r=i}^{i+nw-1}h_r<k,\;
h_i=1,
\right.\nonumber\\
&\hspace*{20mm}\left. \max_{i^\prime =
i-nw+1,\dots,i}\sum_{r=i^\prime}^{i^\prime + nw-1}h_r =
k-1\right\},\nonumber
\end{flalign}
with the convention that $h_r=0$ for $r<0$ or $r>n$. It follows from
(\ref{eq:eq_4_9}) and (\ref{eq:eq_4_10}) that
\begin{flalign}
&\mathbb{P}_N(\widetilde{\mathcal{E}}_n\cap U_2) ~=~
\xi_n^N\mathbb{E}_{N-1}\left[\nu_2^{(n)}(J_1^n,\dots,J_{n}^n)\raisebox{4mm}{}\right]\nonumber\\~=~&
\frac{N}{2n}\left((N-1)\gamma^{(N-1)} +
\mathbb{E}\left[\tilde{\nu}(\Pi)\left|M=N-1\raisebox{4mm}{}\right.\right]\raisebox{4.5mm}{}\right)
+ o(n^{-1}),\label{eq:eq_4_11}
\end{flalign}
since
$$\lim_{n\to\infty}\mathbb{E}_{N-1}\left[\nu_2^{(n)}(J_1^n,\dots,J_{n}^n)\raisebox{4mm}{}\right]
= (N-1)\gamma^{(N-1)} + \mathbb{E}\left[\tilde{\nu}(\Pi)\left|M =
N-1\raisebox{4mm}{}\right.\right].$$ Finally, by (\ref{eq:eq_4_1}),
(\ref{eq:eq_4_2}), (\ref{eq:eq_4_7}), (\ref{eq:eq_4_8}) and
(\ref{eq:eq_4_11}),
\begin{flalign}
\gamma^{(N)} ~=~& \mathbb{P}_N(\mathcal{E})\nonumber\\
~=~& \mathbb{P}_N(\mathcal{E}|U_1)\mathbb{P}(U_1) +
\mathbb{P}_N(\mathcal{E}\cap U_2) + O(n^{-2})\nonumber\\
~=~& \left(\gamma_n^{(N)} + \frac{N}{2n}\mathbb{E}\left[\nu(\Pi)\left|M=N-1\raisebox{4mm}{}\right.\right]\right)\left(1-\frac{N(N-1)}{2n}\right)\nonumber\vspace*{1mm}\\
& ~~+~
\frac{N}{2n}\left((N-1)\gamma^{(N-1)}+\mathbb{E}\left[\tilde{\nu}(\Pi)\left|M=N-1\raisebox{4mm}{}\right.\right]\raisebox{4.5mm}{}\right)
+ o(n^{-1}),\nonumber
\end{flalign}
from which the theorem follows. \end{proof}

\begin{rem}%{\em Remark 3.}
Similarly to (\ref{alpha}) and (\ref{beta}), it can
be shown that
\begin{equation}
\gamma^{(N)}-\gamma_n^{(N)}=C_\gamma n^{-1}+O(n^{-2}),\label{gamma}
\end{equation}
where
\begin{equation}
C_\gamma =\frac{N(N-1)}{2}(\gamma^{(N-1)}-\gamma^{(N)})
+\frac{N}{2}\mathbb{E}\left[\nu(\Pi)+\tilde{\nu}(\Pi) \;\Big|
\;M=N-1\right].\label{gamma1}
\end{equation}
\end{rem}

\begin{rem}%{\em Remark 4.}
Note that $\alpha_n$ and $\beta_n$ are weighted
averages of $\gamma^{(N)}_n$ over $N$ with binomial probabilities
$\binom{n}{N} p_n^N (1-p_n)^{n-N}$ as weights where
$p_n=1-e^{-\lambda/n}$ for $\alpha_n$ and $p_n=\lambda/n$ for
$\beta_n$. The limits  $\lim_{n\to \infty} n(\alpha -\alpha_n)$ and
$\lim_{n \to \infty} n(\alpha-\beta_n)$ in Theorems 2.1 and 2.3 can
be {\em formally} derived from  $\lim_{n \to \infty} n(\gamma^{(N)}
-\gamma^{(N)}_n)$ by interchanging $\lim_n$ and $\Sigma_N$. While
the details are omitted, it is of interest to note that the formal
derivations suggest the following identity
$$\int_0^1 \mathbb{P}\left(\max_{0\le t\le 1-w}\Big|(\Pi\cup\{u\})\cap(t,t+w]\Big|\ge k\right)du=\sum_{N=0}^\infty \frac{e^{-\lambda} \lambda^N}{N!}\gamma^{(N+1)},$$
which can be proved by observing that both sides are equal to
 $$\mathbb{P}\left(\max_{0\le t\le 1-w}\Big|(\Pi\cup\{Q\})\cap(t,t+w]\Big|\ge k\right)$$
where $Q$ is a random point which is uniformly distributed  on
$(0,1]$ and independent of $\Pi$.
\end{rem}

\section{Numerical results and discussion}

Using the Markov chain embedding method ({\em cf.}
\cite{fu2001, fwl2012,ka1995}), we computed the discrete
approximations $\alpha_n,\beta_n$ and $\gamma^{(N)}_n$ for various
combinations of parameter values $(k,w,\lambda)$ (the unconditional
case) and $(k,w,N)$ (the conditional case). Figure 2 plots
$n(\alpha-\alpha_n), n(\alpha-\beta_n)$ and
$n(\gamma^{(N)}-\gamma^{(N)}_n)$  for $n=25(5)600$ with
$k=5,w=0.4,\lambda=8$ and $N=8$, while Table 1 presents the  values for $n=50, 100(100)600$, where
the superscript $(N)$ in $\gamma^{(N)}$ and $\gamma^{(N)}_n$ is suppressed for ease of notation. The exact probabilities
$\alpha=P^\ast(k;\lambda,w)=P^\ast(5;8,0.4)=0.628144085$ and
$\gamma^{(8)}=P(k;N,w)=P(5;8,0.4)=0.780861440$ are taken from \cite{nn1980}. By Theorems 2.1, 2.3 and 3.1,  $n(\alpha-\alpha_n), n(\alpha-\beta_n)$ and $n(\gamma^{(N)}-\gamma^{(N)}_n)$ converge, respectively,  to the limits $C_\alpha, C_\beta$ and $C^{(N)}_\gamma$ which are given in
(\ref{alpha1}), (\ref{beta1}) and (\ref{gamma1}). These limits were estimated by Monte Carlo simulation with $10^6$ replications, resulting in
$C_\alpha=4.6322\pm0.0096, C_\beta=0.8297\pm0.0167, C^{(N)}_\gamma=2.7279\pm 0.0114$.

In view of (\ref{alpha}), (\ref{beta}) and (\ref{gamma}), the rate
of convergence for $\alpha_n,\beta_n$ and $\gamma^{(N)}_n$ can be
improved by using Richardson's extrapolation. Specifically for
$w=p/q$, suppose $n$ is even such that $n/2$ is a multiple of $q$.
Let
$$\tilde{\alpha}_n:=2\alpha_n -\alpha_{n/2}\;,\;\; \;\tilde{\beta}_n:=2\beta_n-\beta_{n/2}\;,\;\;\; \tilde{\gamma}_n^{(N)}:=2 \gamma_n^{(N)}-\gamma_{n/2}^{(N)}\;.$$ Then we have
$$\alpha-\tilde{\alpha}_n=O(n^{-2})\;, \;\;\;\alpha-\tilde{\beta}_n=O(n^{-2})\;,\;\;\; \gamma^{(N)}-\tilde{\gamma}^{(N)}_{n}=O(n^{-2})\;.$$
 Table 2 presents numerical results
comparing $\alpha_n, \tilde{\alpha}_n, \beta_n$ and
$\tilde{\beta}_n$ for the unconditional case. Table 3 compares
$\gamma^{(N)}_n$ and $\tilde{\gamma}^{(N)}_n$ for the conditional
case.

\begin{figure}[p]
\begin{center}
\includegraphics[width=.75\textwidth]{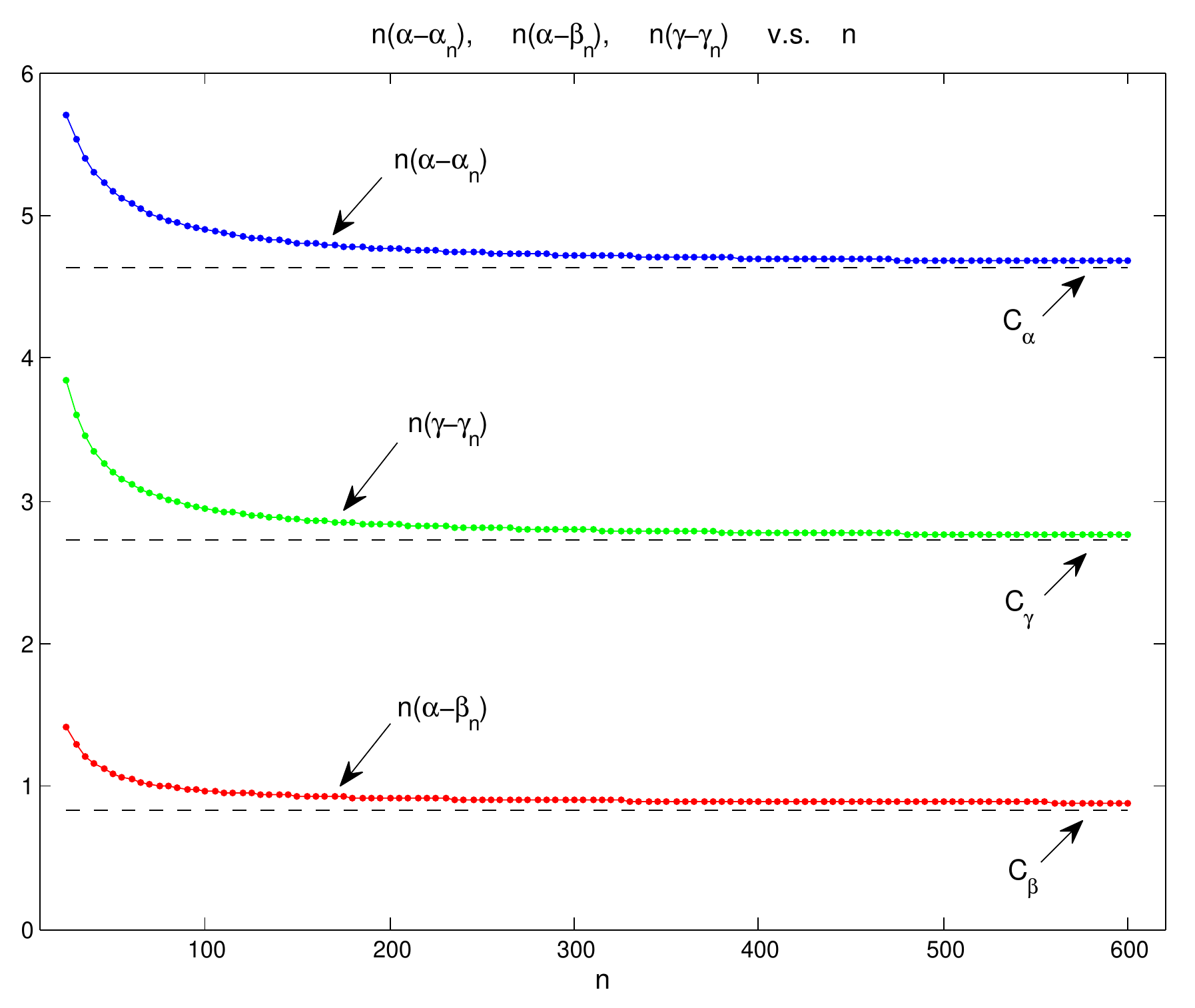}
\caption{Plot of $n(\alpha - \alpha_n)$, $n(\alpha - \beta_n)$, and
$n(\gamma - \gamma_n)$ for $n = 25(5)600$ with parameters $w = 0.4,
k = 5, \lambda = 8, N = 8$.}\label{fig:convergence}
\end{center}
\end{figure}

\begin{table*}[p]
\caption{Exact values for  $n(\alpha-\alpha_n)$, $n(\alpha-\beta_n)$
and $n(\gamma -\gamma_n)$ and the estimated limits $\mbox{with $k=5,
w=0.4, \lambda=8, N=8$}$} \label{tbl:constant_C}
\begin{center}
\resizebox{.7\textwidth}{!}{
\begin{tabular}{crrr}
%\multicolumn{4}{c}{\em\scriptsize and the estimated limits with $w=0.4, k=5, \lambda=8, N=8$}\vspace*{2mm}\\
\hline
\multirow{2}{*}{\hspace*{5mm}$n$\hspace*{5mm}} & \multicolumn{2}{c}{Unconditional\raisebox{4mm}{}} & \multicolumn{1}{c}{Conditional}\\
\cmidrule(lr){2-3}\cmidrule(lr){4-4} &
\multicolumn{1}{c}{$n(\alpha-\alpha_n)$} &
\multicolumn{1}{c}{$n(\alpha-\beta_n)$} &
\multicolumn{1}{c}{$n(\gamma-\gamma_n)$}  \\
\hline \hspace*{1.5mm}$50$ & ~~5.168687891~~
 & ~~1.088138412~~ & ~~3.203626718~~  \\
$100$ &    ~~4.898446228~~ & ~~0.969210822~~ & ~~2.948216315~~  \\
$200$ & ~~4.764816517~~ & ~~0.917535676~~ & ~~2.832944959~~  \\
$300$ & ~~4.720634969~~ & ~~0.901298885~~ & ~~2.796179050~~  \\
$400$ & ~~4.698621375~~ & ~~0.893353201~~ & ~~2.778092954~~  \\
$500$ & ~~4.685438621~~ & ~~0.888639155~~ & ~~2.767334580~~  \\
$600$ & ~~4.676660628~~ & ~~0.885518032~~ & ~~2.760200812~~
\\\hline
Limit\raisebox{3.5mm}{} & \multicolumn{1}{c}{$C_\alpha$} &
\multicolumn{1}{c}{$C_\beta$} & \multicolumn{1}{c}{$C_\gamma$} \\
\hline {Estimate}\raisebox{3mm}{} & \multicolumn{1}{c}{4.6322} &
\multicolumn{1}{c}{0.8297} &
\multicolumn{1}{c}{2.7279}\\
{(Std. Err.)} & \multicolumn{1}{c}{(0.0096)} &
\multicolumn{1}{c}{(0.0167)} & \multicolumn{1}{c}{(0.0114)}
\\ \hline
\end{tabular}
}
\end{center}
\end{table*}

\begin{table}[t]
\begin{center}
\caption{The unconditional
case}\label{tbl:unconditional}\vspace*{-4mm}
\resizebox{1.0\textwidth}{!}{ {\footnotesize
\begin{tabular}{cccrcccccc}
\hline
\multicolumn{3}{c}{\footnotesize Parameters$\raisebox{4mm}{}$}  & \hspace*{8mm} &  &  &  $n$  &  &  & {\footnotesize ~~~~~~Exact~~~~~~} \\
\cmidrule(lr){1-3}\cmidrule(lr){5-9} \cmidrule(lr){10-10}
\hspace*{3mm}$\lambda$\hspace*{3mm}    &  \hspace*{3mm}$w$\hspace*{3mm}     &   \hspace*{3mm}$k$\hspace*{3mm}    &   \hspace*{18mm}\raisebox{3mm}{}                         &   $25$        &   $50$ &  $100$   &  $200$    &  $400$    &  \hspace*{10mm}$\alpha$\hspace*{10mm}\\
\hline 4   &  0.2  &  3    &   $\alpha_n$\hspace*{3mm} & 0.226474137
& 0.297081029  &  0.330413369  &  0.346549002  & 0.354481473
     &  ~0.362322986\raisebox{4mm}{}  \\
    &       &       &   $\alpha - \alpha_n$\hspace*{3mm}         &   0.135848849  &   0.065241957   &  0.031909617   &  0.015773984   &  0.007841513
     &    \\
    &       &       &   $\widetilde{\alpha}_n$\hspace*{3mm}      &        &   0.367687921  &  0.363745709  &  0.362684635  &  0.362413943    &    \\
    &       &       &   $\alpha - \widetilde{\alpha}_n$\hspace*{3mm}  &   &   $-$0.005364935\hspace*{2.6mm} & $-$0.001422723\hspace*{2.6mm}  & $-$0.000361649\hspace*{2.6mm}  & $-$0.000090957\hspace*{2.6mm}   &    \\
\cmidrule(l){4-9}
    &       &       &   $\beta_n$\hspace*{3mm}                   &   0.269466265  &  0.321289109  &  0.342964036  &  0.352912780  &  0.357682871    &    \\
    &       &       &   $\alpha - \beta_n$\hspace*{3mm}             &   0.092856721  &  0.041033877  &  0.019358950  &  0.009410206  &  0.004640115    &    \\
    &       &       &   $\widetilde{\beta}_n$\hspace*{3mm}       &    & 0.373111952  &  0.364638964  &  0.362861523  &  0.362452963   &    \\
    &       &       &   $\alpha - \widetilde{\beta}_n$\hspace*{3mm} &    & $-$0.010788966\hspace*{2.6mm}  &  $-$0.002315978\hspace*{2.6mm} &  $-$0.000538537\hspace*{2.6mm} &  $-$0.000129977\hspace*{2.6mm}  &    \\
\cmidrule(lr){1-10} 4   &  0.2  &  4    &   $\alpha_n$\hspace*{3mm}
&   0.028528199  &  0.063252847  &  0.083861016 & 0.094813938 &
0.100432989    &  ~0.106139839\raisebox{3mm}{}  \\
    &       &       &   $\alpha - \alpha_n$\hspace*{3mm}                       &   0.077611640  &   0.042886992  &  0.022278823  &  0.011325901  &  0.005706850     &    \\
    &       &       &   $\widetilde{\alpha}_n$\hspace*{3mm}      &       &   0.097977495  &   0.104469184   &  0.105766860  &   0.106052039    &    \\
    &       &       &   $\alpha - \widetilde{\alpha}_n$\hspace*{3mm}           &   & 0.008162344   &  0.001670655   &  0.000372979  &   0.000087800    &    \\
\cmidrule(l){4-9}
    &       &       &   $\beta_n$\hspace*{3mm}                   &   0.037826080  &  0.071921990  &  0.089167692  &  0.097701122  &  0.101933685    &    \\
    &       &       &   $\alpha - \beta_n$\hspace*{3mm}          & 0.068313759  &   0.034217849  &  0.016972147  &  0.008438717  &  0.004206154    &    \\
    &       &       &   $\widetilde{\beta}_n$\hspace*{3mm}       &  & 0.106017899  &   0.106413395  &   0.106234551  &   0.106166248    &    \\
    &       &       &   $\alpha - \widetilde{\beta}_n$\hspace*{3mm} &    & 0.000121940  &  $-$0.000273556\hspace*{2.6mm} &  $-$0.000094712\hspace*{2.6mm} &  $-$0.000026409\hspace*{2.6mm}  &    \\
\cmidrule(lr){1-10} 8   &  0.4  &  5    &   $\alpha_n$\hspace*{3mm}
& 0.400190890  &   0.524770327   &  0.579159623   &  0.604320002 &  0.616397532 &  ~0.628144085\raisebox{3mm}{}  \\
    &       &       &   $\alpha - \alpha_n$\hspace*{3mm}   & 0.227953195  &  0.103373758  &  0.048984462  &  0.023824083     &  0.011746553
    &    \\
    &       &       &   $\widetilde{\alpha}_n$\hspace*{3mm}      &   &  0.649349765 & 0.633548918  &  0.629480382  &  0.628475061   &    \\
    &       &       &   $\alpha - \widetilde{\alpha}_n$\hspace*{3mm}     &      &   $-$0.021205680\hspace*{2.6mm}  &   $-$0.005404833\hspace*{2.6mm} & $-$0.001336297\hspace*{2.6mm}  & $-$0.000330976\hspace*{2.6mm}  &      \\
\cmidrule(l){4-9}
    &       &       &   $\beta_n$\hspace*{3mm}                   &   0.571524668  &  0.606381317  &  0.618451977  &  0.623556407  &  0.625910702    &    \\
    &       &       &   $\alpha - \beta_n$\hspace*{3mm}          &   0.056619417  & 0.021762768  &  0.009692108  &  0.004587678  &  0.002233383    &    \\
    &       &       &   $\widetilde{\beta}_n$\hspace*{3mm}       &   & 0.641237966 & 0.630522637  &  0.628660836  &  0.628264997   &    \\
    &       &       &   $\alpha - \widetilde{\beta}_n$\hspace*{3mm} &   & $-$0.013093881\hspace*{2.6mm}  &  $-$0.002378552\hspace*{2.6mm} &  $-$0.000516751\hspace*{2.6mm} &  $-$0.000120912\hspace*{2.6mm}  &    \\
\cmidrule(lr){1-10} 8   &  0.4  &  6    &   $\alpha_n$\hspace*{3mm} &   0.156407681  &  0.278520053  &  0.341202440  &  0.372097133  &  0.387351968   &  ~0.402452588\raisebox{3mm}{}  \\
    &       &       &   $\alpha - \alpha_n$\hspace*{3mm}         &   0.246044907  &  0.123932535  &  0.061250148  &  0.030355455     &  0.015100620     &    \\
    &       &       &   $\widetilde{\alpha}_n$\hspace*{3mm}      &                &  0.400632426  &  0.403884826  &  0.402991826     &  0.402606803     &    \\
    &       &       &   $\alpha - \widetilde{\alpha}_n$\hspace*{3mm}  &           &  0.001820162  &   $-$0.001432238\hspace*{2.6mm} & $-$0.000539238\hspace*{2.6mm}  & $-$0.000154215\hspace*{2.6mm}  & \\
\cmidrule(l){4-9}
    &       &       &   $\beta_n$\hspace*{3mm}                   &   0.278663391  &  0.351874806  &  0.379351117  &  0.391387631  &  0.397034846    &    \\
    &       &       &   $\alpha - \beta_n$\hspace*{3mm}          &   0.123789197  &  0.050577782  &  0.023101471  &  0.011064957  &  0.005417742    &    \\
    &       &       &   $\widetilde{\beta}_n$\hspace*{3mm}       &                &  0.425086221  &  0.406827428  &  0.403424144  &  0.402682062    &    \\
    &       &       &   $\alpha - \widetilde{\beta}_n$\hspace*{3mm} &             & $-$0.022633633\hspace*{2.6mm}  &  $-$0.004374840\hspace*{2.6mm} &  $-$0.000971556\hspace*{2.6mm} &  $-$0.000229474\hspace*{2.6mm}  &    \\

\hline

\end{tabular}
} }
\vspace*{2mm}
\end{center}
\end{table}

\begin{table}[t]
\begin{center}
\caption{The conditional case}\label{tbl:conditional}\vspace*{-4mm}
\resizebox{1.0\textwidth}{!}{ {\footnotesize
\begin{tabular}{cccrcccccc}
\hline
\multicolumn{3}{c}{\footnotesize Parameters$\raisebox{4mm}{}$}  & \hspace*{8mm} &  &  &  $n$  &  &  & {\footnotesize ~~~~~~Exact~~~~~~}\\ %\multirow{2}{*}{$\gamma$} \\
\cmidrule(lr){1-3}\cmidrule(lr){5-9}\cmidrule(lr){10-10}
\hspace*{3mm}$w$\hspace*{3mm}    &  \hspace*{3mm}$k$\hspace*{3mm}     &   \hspace*{3mm}$N$\hspace*{3mm}    &   \hspace*{18mm}\raisebox{3mm}{}                         &   $25$        &   $50$ &  $100$   &  $200$    &  $400$    &  \hspace*{10mm}$\gamma$\hspace*{10mm}\\
\hline 0.2 &  4  &  6    &   $\gamma_n$\hspace*{3mm} & 0.080688876 &
 0.155913836  &  0.194799457  &  0.214242757  & 0.223935622
    &  ~0.233600000\raisebox{4mm}{}  \\
    &       &       &   $\gamma- \gamma_n$\hspace*{3mm}         &   0.152911124  &  0.077686164  &  0.038800543  &   0.019357243   &  0.009664378     &    \\
    &       &       &   $\widetilde{\gamma}_n$\hspace*{3mm}      &     &   0.231138796  &  0.233685077  &  0.233686058  &  0.233628487    &    \\
    &       &       &   $\gamma - \widetilde{\gamma}_n$\hspace*{3mm}  &   &   0.002461204 & $-$0.000085077\hspace*{2.6mm}  & $-$0.000086058\hspace*{2.6mm}  & $-$0.000028487\hspace*{2.6mm}   &    \\
\cmidrule(l){4-10}
    &       &  7    &   $\gamma_n$\hspace*{3mm}                   &   0.166798419  &   0.294914521  &   0.354660825  &   0.383179030  &   0.397084766    &  ~0.410752000  \\
    &       &       &   $\gamma - \gamma_n$\hspace*{3mm}             &   0.243953581  &   0.115837479  &   0.056091175  &   0.027572970  &   0.013667234    &    \\
    &       &       &   $\widetilde{\gamma}_n$\hspace*{3mm}       &    & 0.423030623  &   0.414407129  &   0.411697234  &   0.410990502    &    \\
    &       &       &   $\gamma - \widetilde{\gamma}_n$\hspace*{3mm} &    & $-$0.012278623\hspace*{2.6mm}  &  $-$0.003655129\hspace*{2.6mm} &  $-$0.000945234\hspace*{2.6mm} &  $-$0.000238502\hspace*{2.6mm}  &    \\
\cmidrule(lr){4-10}
    &       &  8    &   $\gamma_n$\hspace*{3mm}                  &   0.291588655  &  0.469102180  &  0.542215920  &  0.575193126  &  0.590840920     &  ~0.605949440 \raisebox{3mm}{}  \\
    &       &       &   $\gamma - \gamma_n$\hspace*{3mm}         &   0.314360785  &  0.136847260  &  0.063733520  &  0.030756314  &  0.015108520     &    \\
    &       &       &   $\widetilde{\gamma}_n$\hspace*{3mm}      &                &  0.646615704  &  0.615329660  &  0.608170332  &  0.606488715    &    \\
    &       &       &   $\gamma - \widetilde{\gamma}_n$\hspace*{3mm}  &    &   $-$0.040666264\hspace*{2.6mm} & $-$0.009380220\hspace*{2.6mm}  & $-$0.002220892\hspace*{2.6mm}  & $-$0.000539275\hspace*{2.6mm}   &    \\
\cmidrule(l){4-10}
    &       &  9    &   $\gamma_n$\hspace*{3mm}                   &   0.448718168  &  0.651907101  &  0.723414803  &  0.753448974  &  0.767220941    &   ~0.780225536 \\
    &       &       &   $\gamma - \gamma_n$\hspace*{3mm}          &   0.331507368  &  0.128318435  &  0.056810733  &  0.026776562  &  0.013004595    &    \\
    &       &       &   $\widetilde{\gamma}_n$\hspace*{3mm}       &     & 0.855096034 & 0.794922506  &  0.783483145  &  0.780992907    &    \\
    &       &       &   $\gamma - \widetilde{\gamma}_n$\hspace*{3mm} &    & $-$0.074870498\hspace*{2.6mm}  &  $-$0.014696970\hspace*{2.6mm} &  $-$0.003257609\hspace*{2.6mm} &  $-$0.000767371\hspace*{2.6mm}  &    \\
\cmidrule(lr){1-10} 0.4 &   5   &  6    &   $\gamma_n$\hspace*{3mm}
&   0.162450593  &   0.224402953   &  0.254838093 & 0.269838436 & 0.277276942    &  ~0.284672000\raisebox{3mm}{}  \\
    &       &       &   $\gamma - \gamma_n$\hspace*{3mm}                       &   0.122221407  &  0.060269047  &  0.029833907  &  0.014833564  &  0.007395058     &    \\
    &       &       &   $\widetilde{\gamma}_n$\hspace*{3mm}      &             &   0.286355312 & 0.285273233  &  0.284838778  &  0.284715449    &    \\
    &       &       &   $\gamma - \widetilde{\gamma}_n$\hspace*{3mm}           &     &   $-$0.001683312\hspace*{2.6mm} & $-$0.000601233\hspace*{2.6mm}  & $-$0.000166778\hspace*{2.6mm}  & $-$0.000043449\hspace*{2.6mm}   &    \\
\cmidrule(l){4-10}
    &       &  7    &   $\gamma_n$\hspace*{3mm}                   &   0.371395881  &  0.463718058  &  0.504028994  &  0.522865538  &  0.531971853    &  ~0.540876800  \\
    &       &       &   $\gamma - \gamma_n$\hspace*{3mm}          &   0.169480919  &  0.077158742  &  0.036847806  &  0.018011262  &  0.008904947    &    \\
    &       &       &   $\widetilde{\gamma}_n$\hspace*{3mm}       &     & 0.556040235 & 0.544339929  &  0.541702083  &  0.541078168    &    \\
    &       &       &   $\gamma - \widetilde{\gamma}_n$\hspace*{3mm} &  & $-$0.015163435\hspace*{2.6mm}  &  $-$0.003463129\hspace*{2.6mm} &  $-$0.000825283\hspace*{2.6mm} &  $-$0.000201368\hspace*{2.6mm}  &    \\
\cmidrule(lr){4-10}
    &       &  8    &   $\gamma_n$\hspace*{3mm}                  &   0.627251924  &  0.716788906  &  0.751379277  &  0.766696715  &  0.773916208     &  ~0.780861440\raisebox{3mm}{}  \\
    &       &       &   $\gamma - \gamma_n$\hspace*{3mm}         &   0.153609516  &  0.064072534  &  0.029482163  &  0.014164725  &  0.006945232     &    \\
    &       &       &   $\widetilde{\gamma}_n$\hspace*{3mm}      &                &   0.806325887 & 0.785969648  &  0.782014154  &  0.781135700    &    \\
    &       &       &   $\gamma - \widetilde{\gamma}_n$\hspace*{3mm}              &     &   $-$0.025464447\hspace*{2.6mm} & $-$0.005108208\hspace*{2.6mm}  & $-$0.001152714\hspace*{2.6mm}  & $-$0.000274260\hspace*{2.6mm}   &    \\
\cmidrule(l){4-10}
    &       &  9    &   $\gamma_n$\hspace*{3mm}                   &   0.864220071  &  0.918826852  &  0.936992617  &  0.944511915  &  0.947942424    &  ~0.951173120  \\
    &       &       &   $\gamma - \gamma_n$\hspace*{3mm}          &   0.086953049  &  0.032346268  &  0.014180503  &  0.006661205  &  0.003230696    &    \\
    &       &       &   $\widetilde{\gamma}_n$\hspace*{3mm}       &     & 0.973433633 & 0.955158381  &  0.952031214  &  0.951372933    &    \\
    &       &       &   $\gamma - \widetilde{\gamma}_n$\hspace*{3mm} &    & $-$0.022260513\hspace*{2.6mm}  &  $-$0.003985261\hspace*{2.6mm} &  $-$0.000858094\hspace*{2.6mm} &  $-$0.000199813\hspace*{2.6mm}  &    \\

\hline

\end{tabular}
} }

\vspace*{2mm}
\end{center}
\end{table}

\begin{rem}%{\em Remark 5.}
In Tables 1--3, we have taken relatively large
values of  $w=0.2$ and $0.4$ since the {\em exact} unconditional
probabilities reported in \cite{nn1980} are less accurate for
$w<0.2$. Figure 1 shows that  $n(\alpha-\alpha_n),
n(\alpha-\beta_n)$ and $n(\gamma^{(N)}-\gamma^{(N)}_n)$
monotonically approach $C_\alpha, C_\beta$ and $C^{(N)}_\gamma$,
respectively.
 In Table 2, $\beta_n$ is consistently more accurate than $\alpha_n$, which is not surprising since
$\alpha_n < \min\{\alpha, \beta_n\}$. According to Tables 2 and 3,
when $n$ doubles, the errors of $\alpha_n, \beta_n$ and
$\gamma^{(N)}_n$ decrease by roughly a factor of $2$ while the
errors of the corrected approximations $\tilde{\alpha}_n,
\tilde{\beta}_n$ and $\tilde{\gamma}^{(N)}_n$ decrease by (very)
roughly a factor of $4$. Thus the corrected approximations are much
more accurate than the uncorrected ones. For example,
$\tilde{\alpha}_{100}$ and $\tilde{\beta}_{100}$
($\tilde{\gamma}^{(N)}_{100}$, {\em resp.}) are about as accurate as
or more accurate than $\beta_{400}$ ($\gamma^{(N)}_{400}$, {\em
resp.}).
\end{rem}

\begin{rem}%{\em Remark 6.}
The discrete approximations are usually computed
using the Markov chain embedding method. A major drawback of this
method is the requirement of a very large state space (corresponding
to a large computer  memory space) for some practical applications.
Indeed, it is shown in \cite{cch2001} that to compute $\alpha_n,
\beta_n$ and $\gamma^{(N)}_n$ using the Markov chain embedding
method, the minimum number of states required is
$\binom{nw}{k-1}+1$, which is enormous when $nw$  is large and $k$
is not small. (It should be remarked that \cite{cch2001} is
concerned with computation of the reliability for the so-called
$d$-within-consecutive-$k$-out-of-$n$ system, which is equivalent to
the discrete scan statistic.) The corrected discrete approximations
partially alleviate the requirement of large memory space since
  a reasonable accuracy can be achieved with relatively small $n$.
\end{rem}

\begin{rem}%{\em Remark 7.}
Since the assumption of constant intensity plays a relatively minor
role in the proofs of Theorems 2.1, 2.3 and 3.1, we expect that the
method of proof can be extended to the setting of nonhomogeneous
Poisson point processes, which is relevant to computation of the
power of the continuous scan statistic. In the literature, there
appears to be no general method available
 for computing the exact
power under general nonhomogeneous Poisson point processes. The
corrected discrete approximations may prove to be useful in such a
setting as well as in a multiple-window setting ({\em cf.}
\cite{wgf2013}).
\end{rem}

\section*{Acknowledgements}

This work was supported in part by grants from the Ministry of
Science and Technology of Taiwan.

\end{document}